\documentclass[11pt, review]{article}
%
%
\usepackage{authblk}


\usepackage{amsmath}
\usepackage{amssymb}
\usepackage{amsthm}
\usepackage{geometry}

\usepackage{url}
\usepackage{enumitem}
\usepackage[utf8]{inputenc}
\usepackage[english]{babel}
\usepackage{longtable}
\usepackage[dvipsnames,table,longtable]{xcolor}
\usepackage{booktabs}
\usepackage[ruled,linesnumbered]{algorithm2e}
\usepackage{bbm}
\usepackage{mathtools}
\usepackage{graphicx}
\usepackage{fix-cm}
\usepackage{subcaption}
\usepackage{pgfplots}
\usepackage[title]{appendix}
\usepackage{multirow}
\usepackage{pdflscape} 
\usepackage{scalerel}
\usepackage{lipsum}
\usepackage{comment}
\usepackage{afterpage}

\usepackage{tikz}
\usetikzlibrary{arrows}
\usetikzlibrary{plotmarks}
\usetikzlibrary{shadings,automata,shadows,calc,arrows}
\usetikzlibrary{decorations.pathmorphing,patterns}
\usetikzlibrary{decorations.pathreplacing}
\usetikzlibrary{decorations.markings}
\usetikzlibrary{calc} 
\usetikzlibrary{shapes,snakes}
\usetikzlibrary{positioning}
\usetikzlibrary{shapes.geometric}

\geometry{letterpaper,
			left   = 1.0in,
			right  = 1.0in,
			top    = 1.0in,
			bottom = 1.0in}

\usepackage{lipsum}
\makeatletter
\renewcommand\footnoterule{%
  \kern-3\p@
  \hrule\@width \textwidth
  \kern2.6\p@}
\makeatother

\makeatletter
\renewcommand*{\@fnsymbol}[1]{\ensuremath{\ifcase#1\or *\or \dagger\or \ddagger\or **\or
   \mathsection\or \mathparagraph\or \|\or  \dagger\dagger
   \or \ddagger\ddagger \else\@ctrerr\fi}}
\makeatother

\allowdisplaybreaks

\newcommand{\textBF}[1]{%
    \pdfliteral direct {2 Tr 0.5 w} 
     #1%
    \pdfliteral direct {0 Tr 0 w}%
}

\newcommand{\R}{\mathbb{R}}

\newcommand{\Z}{\mathbb{Z}}

\newcommand{\Zplus}{\Z_{+}}

\newcommand{\bsubeq}{\begin{subequations}}
\newcommand{\esubeq}{\end{subequations}}
\newcommand{\BI}{\begin{itemize}}
\newcommand{\EI}{\end{itemize}}
\newcommand{\I}{\item}
\newcommand{\BE}{\begin{enumerate}}
\newcommand{\EE}{\end{enumerate}}

\newcommand{\crevOne}{\color{black}}

\newcommand{\crevNet}{\color{black}}

\newcommand{\crev}{\color{black}}

\newcommand{\cb}{\color{black}}




\newcommand{\degr}{d}
\newcommand{\density}{D}
\newcommand{\neighb}{\mathcal{N}}

\newcommand{\stableset}{\mathcal{SS}}
\newcommand{\clique}{\mathcal{K}}
\newcommand{\deltamiss}{\delta^{miss}}
\newcommand{\set}{\mathcal{S}}

\newcommand{\cliquedigraph}{D}
\newcommand{\orderedcliques}{\mathcal{O}}
\newcommand{\orderclique}{o}
\newcommand{\arcvar}{x}
\newcommand{\initcliquevar}{\gamma}
\newcommand{\lastcliquevar}{\lambda}
\newcommand{\predvar}{p}
\newcommand{\unorderedindvar}{z}
\newcommand{\domain}{\mathcal{D}}
\newcommand{\instance}{(G, K)}
\newcommand{\graph}{G}

\newcommand{\adjmatrix}{A}
\newcommand{\K}{K}
\newcommand{\vertexset}{\mathcal{V}}
\newcommand{\edgeset}{\mathcal{E}}
\newcommand{\arcset}{\mathcal{A}}

\newcommand{\vertexDMDGP}{\mathbb{(CP^{VERTEX})}}
\newcommand{\rankDMDGP}{\mathbb{(CP^{RANK})}}
\newcommand{\combDMDGP}{\mathbb{(CP^{COMBINED})}}
\newcommand{\IPVR}{\mathbb{(IP^{VR})}}
\newcommand{\IPCD}{\mathbb{(IP^{CD})}}
\newcommand{\Unorderedrelax}{\mathbb{(IP^{RELAX})}}


\newcommand{\combCP}{\mathbb{(CP^{COMBINED})}}


\newcommand{\vrvar}{x}
\newcommand{\rankvar}{r}
\newcommand{\vertvar}{v}

\newcommand{\rankind}{r}
\newcommand{\vertind}{v}


\newcommand{\Lorder}{(}
\newcommand{\Rorder}{)}

\newtheorem{ex}{Example}
\newtheorem{definition}{Definition}

\newtheorem{prop}{Proposition}

\newtheorem{pre}{Domain Reduction Rule}

\newtheorem{infeas}{Infeasibility Check}
\newtheorem{sym}{Symmetry Breaking Condition}

\usepackage[colorlinks=true, linkcolor=blue, citecolor=blue]{hyperref}
\title{Constraint Programming Approaches { for} the Discretizable Molecular Distance Geometry Problem}
%
%
%

\author[1]{Moira MacNeil\thanks{m.macneil@mail.utoronto.ca}}
\author[1]{Merve Bodur\thanks{bodur@mie.utoronto.ca}}

\affil[1]{Department of Mechanical and Industrial Engineering, University of Toronto}

\date{}

\begin{document}
	
	\maketitle

	\begin{abstract}
		The Distance Geometry Problem (DGP)  seeks to find positions for a set of points in geometric space when some distances between pairs of these points are known. The so-called discretization assumptions allow us to discretize the search space of DGP instances. In this paper, {\crevOne we focus on a key subclass of DGP, namely	the Discretizable Molecular DGP, and study its associated graph vertex ordering problem, the Contiguous Trilateration Ordering Problem (CTOP), which helps solve DGP.}
		We propose the first constraint programming formulations for {\crevOne CTOP}, as well as a set of checks for proving infeasibility, domain reduction techniques, symmetry breaking constraints and valid inequalities. Our computational results {\crevOne on random and pseudo-protein instances} indicate that our formulations outperform the state-of-the-art integer programming formulations.
	\end{abstract}
	
	\noindent \textit{Keywords.} 	{\crevOne Discretizable Molecular Distance Geometry}, discretization order, {\crevOne Contiguous Trilateration Ordering}, constraint programming, combinatorial optimization


\section{Introduction} \label{sec:intro}

In its essence, Distance Geometry seeks to find positions for a set of points in geometric space when some distances between pairs of these points are known \cite{dgpbook,mucherino2012DG}. This has many applications, including in molecular geometry, where {\crevNet Nuclear Magnetic Resonance (NMR) spectroscopy gives estimates of some interatomic distances and the three-dimensional structure must be determined} \cite{dgpbook}. {\crevOne Here, the points to be positioned are the atoms of a molecule. As seen in Figure  \ref{fig:DGPgraph}, we have a set of five atoms and know some pairwise distances between them. We would like to give the atoms coordinates in Euclidean space so that the distances are preserved; a possible three-dimensional mapping of these atoms is pictured in Figure \ref{fig:DGPrealize}.} In wireless sensor localization, the positions of wireless sensors such as smartphones must be determined using the estimated distance between sensors, but there is also a fixed component of the network such as routers \cite{liberti2014euclidean}. Other applications include astronomy, robotics, statics and graph rigidity, graph drawing, and clock synchronization \cite{dgpbook, mucherino2012DG, liberti2014euclidean}. More recent applications have arisen from a new variant of the Distance Geometry Problem (DGP), namely dynamical DGP, including air traffic control, crowd simulation, multi-robot formation, and human motion retargeting  \cite{mucherino2012DG}.


{\crevOne \begin{figure}[h]
	\begin{subfigure}{0.5\textwidth}
		\centering
 \begin{tikzpicture}[main_node/.style={circle,fill=white!80,draw,inner sep=0pt, minimum size=16pt},
line width=1.2pt]
\node[main_node] (v1) at (3,0) {$1$};
\node[main_node] (v2) at (0,0) {$2$};
\node[main_node] (v3) at (0,3) {$3$};
\node[main_node] (v4) at (3,3) {$4$};
\node[main_node] (v5) at (5,1.5) {$5$};
\draw[dashed,  line width=.8] (v1) -- node[below, blue] {\footnotesize 2} (v2);
\draw[dashed,  line width=.8] (v1) -- node[above, blue, near end, xshift=0.1cm] {\footnotesize 2.7} (v3);
\draw[dashed,  line width=.8] (v1) -- node[right, blue] {\footnotesize 4.5} (v4);
\draw[dashed,  line width=.8] (v1) -- node[right, blue, yshift=-0.1cm] {\footnotesize 2.8} (v5);
\draw[dashed,  line width=0.8] (v2) -- node[left, blue] {\footnotesize 2.3} (v3);
\draw[dashed, line width=.8] (v2) -- node[above, left, xshift=-.6cm, yshift=-.5cm, blue] { \footnotesize 4} (v4);
\draw[dashed, line width=.8] (v2) -- node[above, xshift=-1cm, yshift=-.3cm,blue] {\footnotesize 2} (v5);
\draw[dashed, line width=.8] (v3) -- node[above, blue] { \footnotesize 5.4} (v4);
\draw[dashed, line width=.8] (v4) -- node[right, blue, yshift=0.1cm ] {\footnotesize 4.5} (v5);
\end{tikzpicture}
		\caption{}
\label{fig:DGPgraph}
	\end{subfigure}
\begin{subfigure}{0.5\textwidth}
\centering
\begin{tikzpicture}[main_node/.style={circle,fill=white,draw,inner sep=0pt, minimum size=16pt},
line width=1.2pt]
\draw [->, gray, line width=1] (0,0,0) -- (4,0,0)  node[right] {x}; 
\draw [<->, gray, line width=1] (0,-2.5,0) -- (0,3,0) node[above] {y}; 
\draw [->, gray, line width=.8] (0,0,0) -- (0,0,5) node[left] {z};  
\node[main_node] (v2) at (1,0,0) {$2$};
\node[main_node] (v1) at (3,0,0) {$1$};
\draw[dashed,  line width=.8] (v1) -- node[above, blue] {\footnotesize 2} (v2);
\node[main_node] (v3) at (1.5,2,-1) {$3$};
\draw[dashed,  line width=.8] (v1) -- node[right, blue] {\footnotesize 2.7} (v3);
	\draw[dashed,  line width=.8] (v2) -- node[right, blue] {\footnotesize 2.3} (v3);
\node[main_node] (v4) at (1,0,4) {$4$};
\draw[dashed,  line width=.8] (v1) -- node[right, below, xshift=.3cm, yshift=.1cm,  blue] {\footnotesize 4.5} (v4);
	\draw[dashed,  line width=.8] (v2) -- node[right, blue] {\footnotesize 4} (v4);
	\draw[dashed,  line width=.8] (v3) -- node[above, left, blue] {\footnotesize 5.4} (v4);
\node[main_node] (v5) at (1,-2,0) {$5$};
\draw[dashed,  line width=.8] (v1) -- node[below, right, blue] {\footnotesize 2.8} (v5);
	\draw[dashed,  line width=.8] (v2) -- node[left, blue, yshift=-.5cm, xshift=0.05cm] {\footnotesize 2} (v5);
	\draw[dashed,  line width=.8] (v4) -- node[below, blue] { \footnotesize4.5} (v5);
\end{tikzpicture}
		\caption{}
\label{fig:DGPrealize}
\end{subfigure}
\caption{\crevOne(\subref{fig:DGPgraph}) A set of points, $1,2,\hdots,5$, and known pairwise distances. (\subref{fig:DGPrealize}) An embedding of these points in three dimensions.}
\label{fig:DGPexample}
\end{figure}
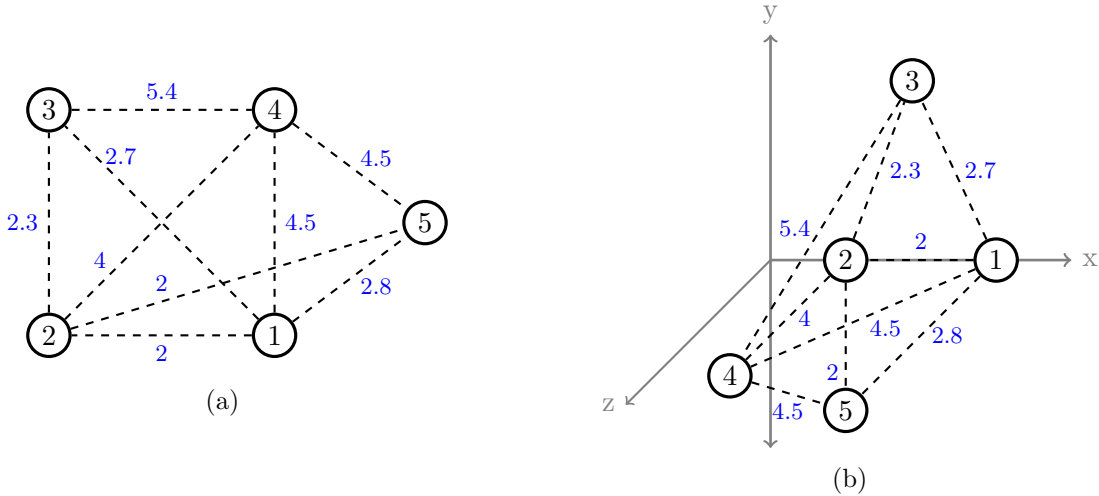
}

 The input to the DGP can be represented as a graph, say $G$, where the vertices are the points we would like to position and weighted edges represent known distances between two points. Formally, we give the definition from \cite{dgpbook}.
 \begin{definition}[Distance Geometry Problem] \label{def:DGP}
 	Given an integer $\K > 0$, and a simple, undirected graph $\graph = (\vertexset,\edgeset)$ with edge weights $w  : \edgeset \to (0, \infty)$,
 	find a function $x : \vertexset \to  \R^\K$ such that for all $\{u,v\} \in \edgeset$:
 	\[
 	\|x(u) - x(v)\| = w(u,v).
 	\]
 \end{definition}
 
If this function $x$ exists, it is called a \textit{realization} for $\graph$, we also refer to the realization as an \textit{embedding} of $\graph$. {\crevNet For example, Figure \ref{fig:DGPrealize} shows a realization with $\K=3$ for the graph in Figure \ref{fig:DGPgraph}, which does not have a realization when $\K=2$.} We assume $\graph$ is connected, since determining if a disconnected graph has a valid realization is equivalent to determining if its connected components have a realization \cite{cassioli2015}. We also assume the norm in Definition \ref{def:DGP} is the Euclidean norm for the remainder of this paper, however this need not be the case. We also note that there exists a form of the problem where the function need not satisfy the strict equality in Definition \ref{def:DGP}, but rather must {\crevOne satisfy $w_l(u,v) \leq \|x(u) - x(v)\|  \leq w_u(u,v)$ for {\crevNet lower and upper} bounds on the weights $w_l(u,v) $ and $w_u(u,v)$ respectively;} this is called the interval DGP \cite{Goncalves2017,Lavor2013}.

 The DGP {\crev with integer weights} is $\mathcal{NP}$-Complete for $\K=1$ and $\mathcal{NP}$-Hard for $\K>1$ \cite{saxe1980embeddability}, motivating the need for solution methods that are able to solve the problem in practice. Solution methods for the DGP include nonlinear programming, semi-definite programming, and the geometric build-up methods  \cite{mucherino2012DG, liberti2014euclidean, mucherinoHDR,Krislock2010}.  If the distances between all pairs of vertices in $\graph$ are known, i.e., $\graph$ is complete, and we assume a solution exists in $\R^\K$, there is a procedure for finding the realization by solving a series of linear equations \cite[{\crev Chapter 3}]{dgpbook}. 
 
 In most applications, the input graph is not complete, {\crevOne i.e., not all pairwise distances are known. In such a case,  there exists an iterative procedure to position the vertices. In this procedure, we begin by choosing $\K$ vertices for which all pairwise distances are known; without loss of generality, we may position them at any coordinates which satisfy all their pairwise distances. Next, we choose a vertex with enough pairwise distances (at least $\K$) relative to the previously positioned vertices and fix its position using this information. We repeat, fixing vertices with at least $\K$ pairwise distances to the previously fixed vertices, until we have a realization. This procedure is the idea behind the branch-and-prune (BP) algorithm, whose success depends on choosing the right fixing order \cite{dgpbook, liberti2014euclidean, mucherinoHDR, Mucherino2012}. Thus we first aim to find the \emph{order} in which the BP algorithm should iterate over vertices \cite{lavor2012discretizable}. The class of {\crevNet DGP} instances possessing such an order {\crev is} called Discretizable DGPs (DDGPs). Otherwise, there may still be a realization, but the instance does not have enough distance information to determine one using the BP algorithm; as such it could be more difficult to find. We note that the input to these discretized ordering problems is an unweighted graph, and the output is a vertex order which is given as input together with the actual pairwise distances to the BP algorithm to find a realization. {\crevNet We direct the reader to \cite{dgpbook, liberti2014euclidean} for a formal definition and more detailed explanation of DDGPs and the BP algorithm.} 
}

{\crevOne  In this paper, we study a key subclass of DDGPs, namely the Discretizable Molecular Distance Geometry Problem (DMDGP) \cite{dgpbook} {for which the literature is quite limited}, {\crevNet as compared to other DDGPs}. The ordering problem associated with DMDGP is the Contiguous Trilateration Ordering Problem (CTOP). Taking an unweighted graph as input, CTOP searches for an ordering of the vertices that ensures the first $\K$ vertices are pairwise adjacent and each subsequent vertex in the order is adjacent to the $\K$ \emph{immediately} previously ranked vertices. {\crevNet The motivation for solving CTOP, instead of a relaxation of the problem, comes from the key application of realizing protein backbones with NMR data, since a certain structure is required to compute interatomic distances using covalent bond lengths and angles between them. These orders also have desirable properties: an order that is a solution to CTOP exhibits symmetries in the BP tree, so to find all solutions for a DMDGP the BP algorithm only needs to be applied to find a single solution and the rest can be enumerated using symmetry \cite[Chapter 5]{dgpbook}.}
  There are other ordering problems associated with DDGPs, one of the most commonly studied being the Discretizable Vertex Ordering Problem (DVOP) which relaxes the need for vertices to be adjacent to the immediately previous vertices to \emph{any} vertex of lower rank. {\crevNet We remark that the solution to CTOP is a solution to DVOP but the converse is not true.}
  
 DVOP has been shown to be polynomially solvable via a greedy algorithm in fixed dimension \cite{Lavor2012}. Moreover, integer programming methods have been used to find DVOP orders which are optimal with respect to some measure of their BP tree size \cite{Omer2017}, and algorithms finding partial orders that optimize the BP search space have been proposed \cite{Goncalves_PartialOrders}. In contrast to DVOP, CTOP is shown to be $\mathcal{NP}$-complete and several integer programming formulations have been proposed to solve it on small instances \cite{cassioli2015}.
 {\crevNet In \cite{mucherino2015deBruijn}, the same underlying graph structure as one of the integer programming formulations of \cite{cassioli2015} is used and an order is constructed using a path in this graph. The algorithm is applied directly to small protein instances and the authors note it cannot be extended to large molecules. As such, we consider \cite{cassioli2015}  the only work of comparison for our study.}

  {\crev The BP algorithm was first proposed for CTOP in \cite{lavor2012discretizable}, {\crevNet and in \cite{Goncalves2016}  answer set programming was shown to improve the performance of the BP algorithm for CTOP.} Furthermore, DMDGP vertex orders with repeated vertices, called re-orders, are considered \cite{Lavor2013}, their computational complexity is analyzed \cite{cassioli2015} and further studied in detail \cite{LAVOR2019}.  }

The rest of the paper is organized as follows. In Section \ref{sec:prelim}, we present in detail the {\crevOne CTOP} and review two existing integer programming (IP) formulations from the literature. In Section \ref{sec:CPmodels}, we introduce three novel constraint programming (CP) formulations for {\crevOne CTOP}. We then present,  in Section \ref{sec:enhance}, a {\cb structural study of CTOP which may eventually aid in its solution}. 
Finally, in Section \ref{sec:results}, we present a computational study which compares the CP and IP models, and which demonstrates {\cb the current utility and drawbacks of the structural findings}.

We note that an overview of our paper, namely the models from the literature as well as our proposed models and structural ideas, is provided in Table \ref{table:DMDGPsummary} of {\crevNet Appendix}  \ref{app:summary}.


\section{Preliminaries} \label{sec:prelim}
In this section, we introduce the common notation used in the paper, provide the problem definition and briefly present the existing formulations for the problem.
\subsection{Notation}
All sets are denoted calligraphically. Let $\graph = (\vertexset, \edgeset)$ be an undirected graph, where $\vertexset$ is the set of vertices and $\edgeset$ is the set of edges. The \emph{adjacency matrix} of $\graph$ is denoted by $\adjmatrix$, i.e.,  $\adjmatrix_{v,u} = 1 $ if and only if edge $\{u,v\} \in \edgeset$. Denote the \emph{neighbourhood} of a vertex $\vertind$ as $\neighb(\vertind)$,  i.e., $\neighb(\vertind) = \{ u \in \vertexset : \{u,v\} \in \edgeset\}$. Thus $\vertind \notin \neighb(\vertind)$ and the \emph{degree} of $\vertind$ is $\degr(\vertind) = |\neighb(\vertind)|$. We let $\graph[\vertexset'] = (\vertexset', \edgeset')$ be the subgraph of $\graph$ \emph{induced} by $\vertexset' \subseteq \vertexset$, and thus $\edgeset' = \{\{u,v\}  \in \edgeset : u,v \in \vertexset' \}$. A \emph{clique}, $\clique$, in $\graph$ is a set of vertices $\{v_1, v_2, \hdots, v_{|\clique|}\} \subseteq \vertexset$ such that $\{v_i, v_j\} \in \edgeset$ for all $v_i, v_j \in \clique$ such that $v_i \neq v_j$. Similarly, a \emph{stable set}, $\stableset$, in $\graph$ is a set of vertices $\{u_1, \hdots, u_{|\stableset|}\} \subseteq \vertexset$ such that $\{u_i, u_j\} \notin \edgeset$ for all $u_i, u_j \in \stableset$. We define an \emph{adjacent predecessor} of a vertex $v \in \vertexset$ as $u \in \vertexset$ with $\{u,v\} \in \edgeset$ such that $u$ precedes $v$ in a vertex order, and we define {\crevOne a \emph{contiguous predecessor} of a vertex $v \in \vertexset$ as $u \in \vertexset$ with $\{u,v\} \in \edgeset$, such that there is no $w \in \vertexset$ with $\{w,v\} \notin \edgeset$ between $v$ and $u$ in the vertex order. Thus, $u$ is an adjacent predecessor that immediately precedes $v$ in the {\crevNet order, meaning} there is no non-adjacent vertex between $u$ and $v$ in the vertex order.}

For $a, b \in \Zplus$, $a \leq b$,  we introduce the notation $[a] = \{0, 1, \hdots, {\crevOne a}\}$ and $[a, b] = \{a, a+1, \hdots, b\}$. If $a>b$, then  $[a,b]=\emptyset$, similarly if $a<0$, then  $[a]=\emptyset$. Indices follow these conventions: indices start at $0$, so that the possible positions of a vertex order are {\crev $[|\vertexset|-1]$}. We let $|\vertexset| = n$, and use $|\vertexset|$ in relation to vertices and $n$ in relation to ranks{\crevOne , i.e., positions,} of a vertex order. 

Finally, we introduce the set  $\vertexset^{\degr[\K, \K+\delta]} = \{\vertind \in \vertexset : \ \degr(\vertind) \in [\K,  \K+ \delta]\}$ for some fixed positive integer $\delta$, the set of vertices with degrees in $[\K, \K+\delta]$.

\subsection{Problem Definition}
{\crevOne Given an integer dimension $\K$,} the \textit{Discretizable Molecular Distance Geometry Problem} (DMDGP) \cite{cassioli2015} is the {\crev problem of finding a realization in $\R^\K$ given a} simple, connected, undirected graph instance, $\graph = (\vertexset,\edgeset)$, possessing a total order of its vertices that satisfies the following:
\BE[label=(\roman*)]
\item the first $\K$ vertices in the order form a clique in the input graph $\graph$, and
\item {\crevOne each vertex with rank $\geq \K$ has at least $\K$ \textit{contiguous predecessors}. That is, for each vertex at position $k \in [\K, n-1 ]$ along with the vertices at positions $[k-\K, k-1]$ form a $(\K+1)$-clique in the input graph. }
\EE
We refer to a total order that satisfies (i) and (ii) as a \emph{DMDGP order}, and a clique satisfying (i) as the \textit{initial clique}. We say an instance for which a DMDGP order exists is \emph{feasible}, otherwise it is infeasible. The problem of determining whether a DMDGP order exists for $\graph$ is known as the Contiguous Trilateration Ordering Problem (CTOP) \cite{cassioli2015}. An instance of CTOP, i.e., an integer $\K > 0$ and a simple, undirected, connected graph $\graph = (\vertexset,\edgeset)$, will be denoted $(\graph = (\vertexset,\edgeset), \K)$ or simply $\instance$. Cassioli et al.\ \cite{cassioli2015} proved CTOP is $\mathcal{NP}$-complete. 

{\crevOne We can restate points (i) and (ii) of the definition using a key property of the problem structure, as mentioned in \cite{cassioli2015}:
	\begin{definition} [DMDGP Order]	\label{prop:key}
		Given an integer dimension $\K$, a DMDGP order is a total order of the vertices of a simple, connected, undirected graph $\graph = (\vertexset,\edgeset)$, so that the order forms a series of $(\K+1)$-cliques which overlap by at least $\K$ vertices. 
	\end{definition}
We note that there may be extra edges between vertices than those that form the overlapping cliques. This important property is depicted in Figure \ref{fig:DMDGPcliques} for Example \ref{ex:DMDGP}. Considering this overlapping clique structure, we refer to the first of those overlapping cliques as the \emph{initial $(K+1)$-clique}.}

\begin{figure}[h]
	\begin{subfigure}{0.5\textwidth}
		\centering
		\begin{tikzpicture}[main_node/.style={circle,fill=white!80,draw,inner sep=0pt, minimum size=16pt},
		line width=1.2pt]
		\node[main_node] (v0) at (1.2,1.9) {$v_0$};
		\node[main_node] (v1) at (2.1,1) {$v_1$};
		\node[main_node] (v2) at (1.2,0) {$v_2$};
		\node[main_node] (v3) at (0,0) {$v_3$};
		\node[main_node] (v4) at (-0.8,1) {$v_4$};
		\node[main_node] (v5) at (0, 1.9) {$v_5$};
		\draw[-] (v0) -- (v1) -- (v2) -- (v3) -- (v4) -- (v2) -- (v0) -- (v5) -- (v3) -- (v1) -- (v5) -- (v2);	
		\end{tikzpicture}
		\caption{}
		\label{fig:DMDGPgraph}
	\end{subfigure}
	\begin{subfigure}{0.5\textwidth}
		\centering
		\begin{tikzpicture}[main_node/.style={circle,fill=white!80,draw,inner sep=0pt, minimum size=16pt},	line width=1.2pt]
		\tikzset{
			ncbar angle/.initial=90,
			ncbar/.style={
				to path=(\tikztostart)
				-- ($(\tikztostart)!#1!\pgfkeysvalueof{/tikz/ncbar angle}:(\tikztotarget)$)
				-- ($(\tikztotarget)!($(\tikztostart)!#1!\pgfkeysvalueof{/tikz/ncbar angle}:(\tikztotarget)$)!\pgfkeysvalueof{/tikz/ncbar angle}:(\tikztostart)$)
				-- (\tikztotarget)
			},
			ncbar/.default=-0.5cm,
		}
		\draw[-] (0,0) to (6,0);
		\draw[-] (0,1.2) to (6,1.2);
		\draw[-] (0,0) to (0,1.2);
		\draw[-] (1,0) to (1,1.2);
		\draw[-] (2,0) to (2,1.2);
		\draw[-] (3,0) to (3,1.2);
		\draw[-] (4,0) to (4,1.2);
		\draw[-] (5,0) to (5,1.2);
		\draw[-] (6,0) to (6,1.2);
		\node[main_node] (v4) at (0.5,0.8) {$v_4$};
		\node[main_node] (v2) at (1.5,0.8) {$v_2$};
		\node[main_node] (v3) at (2.5,0.8) {$v_3$};
		\node[main_node] (v1) at (3.5,0.8) {$v_1$};
		\node[main_node] (v5) at (4.5, 0.8) {$v_5$};
		\node[main_node] (v0) at (5.5,0.8) {$v_0$};
		\draw[ncbar, blue, ultra thick] (0.1,0.6) to (2.9,.6);
		\draw[ncbar, red, ultra thick] (1.1,0.7) to (3.9,.7);
		\draw[ncbar, Green, ultra thick] (2.1,0.8) to (4.9,.8);
		\draw[ncbar, orange, ultra thick] (3.1,0.9) to (5.9,.9);
		
		\end{tikzpicture}
		\caption{}
		\label{fig:DMDGPcliques}
	\end{subfigure}
	\caption{(\subref{fig:DMDGPgraph}) A graph instance which is feasible for CTOP with $\K =2$. (\subref{fig:DMDGPcliques}) Overlapping cliques of the order$\Lorder v_4, v_2, v_3, v_1, v_5, v_0 \Rorder$.}
	\label{fig:DMDGPexample}
\end{figure}

\begin{ex} \label{ex:DMDGP}
	The graph given in Figure \ref{fig:DMDGPgraph} with $\K = 2$ is a feasible instance for CTOP. A possible DMDGP order is $\Lorder v_4, v_2, v_3, v_1, v_5, v_0 \Rorder$. Clearly, since they are adjacent $\{v_4, v_2 \}$ form a clique, $v_3$ is adjacent to both of its immediate predecessors:  $v_4, \text{ and }  v_2$, so $\{v_4, v_2, v_3\}$ form a $(\K+1)$-clique in the input graph. Similarly, $v_1$ is adjacent to  $v_2, \text{ and } v_3$, forming a $(\K+1)$-clique in the input graph and so on. 
\end{ex}

\subsection{Existing Integer Programming Models} 
\label{subsec:DMDGP_exist}
Prior to this work, Cassioli et al.\ \cite{cassioli2015} present three integer programming (IP) formulations for CTOP. Below we summarize their properties, while we provide their full details for completeness in {\crevNet Appendix} \ref{app:IPapp}.
\BI
\I The vertex-rank formulation $\IPVR$: They introduce $|\vertexset| \times n$ binary variables indicating vertex-rank assignment. Then, the model contains ($|\vertexset| + n$)-many 1-1 assignment constraints and ($|\vertexset| \times n$)-many clique constraints.
\I The clique digraph formulation $\IPCD$: They enumerate all ordered cliques of size $(\K+1)$ in $G$, define a clique digraph $\cliquedigraph$ with vertices as those ordered cliques and arcs for pairs of cliques that suitably overlap to follow each other in the order (as in Figure \ref{fig:DMDGPcliques}). {\crevNet We note $\cliquedigraph$ is exactly the pseudo de Bruijn graph structure described in \cite{mucherino2015deBruijn}.} Then, the CTOP solution corresponds to a path in $\cliquedigraph$. This IP model has digraph arc variables, first clique and last clique variables, and precedence variables\footnote{In \cite{cassioli2015}, the variables for predecessors are given as $w_{uv}$, we believe this is a typo that the variables are in fact their $y_{uv}$ variables which are our $\predvar_{uv}$ variables in the version provided in {\crevNet Appendix} \ref{app:CDIP}.} for vertices in $G$. 
\I The unordered clique relaxation $\Unorderedrelax$: They relax the strict clique ordering constraints of the clique digraph formulation, and solve this relaxation as a first check for the existence of a DMDGP order. The benefit of this formulation is that it reduces the number of variables in $\IPCD$, because we have reduced the worst case number of vertices in the  $\cliquedigraph$  by a factor of $(\K+1)!$. When a solution to $\Unorderedrelax$ is found, it must be verified as this solution does not necessarily yield a DMDGP order. The verification is a simple check to ensure the linear order solution forms a DMDGP order. The strength in this formulation is that if $\Unorderedrelax$ is infeasible, there is no DMDGP order for the instance.
\EI

Regarding the vertex-rank formulation $\IPVR$, {\crevOne we prove that its LP relaxation is always feasible.}
\begin{prop}
\label{prop:LPrelax}
	The LP relaxation of $\IPVR$ on any instance $\graph=(\vertexset,\edgeset)$ with $K \geq 2$ is feasible.
\end{prop}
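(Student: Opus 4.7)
The plan is to exhibit an explicit fractional feasible solution. Given the complete symmetry of $\IPVR$ across the vertex index and the rank index, the natural candidate is the uniform assignment $\bar{x}_{v,r} = 1/n$ for every $v \in \vertexset$ and every $r \in [n-1]$, and my goal is to show that $\bar{x}$ lies in the LP polytope.

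First, I would verify the $|\vertexset| + n$ 1-1 assignment constraints. Since $|\vertexset| = n$ and both summation indices range over sets of cardinality $n$, one immediately has $\sum_{r \in [n-1]} \bar{x}_{v,r} = 1$ for each $v \in \vertexset$ and $\sum_{v \in \vertexset} \bar{x}_{v,r} = 1$ for each rank $r$. Hence every assignment constraint is satisfied with equality under $\bar{x}$.

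Next, I would handle the $|\vertexset| \times n$ clique constraints. These constraints impose, for each pair $(v,r)$, that the contiguous predecessors of $v$ (or the vertices forming the initial $(\K+1)$-clique, when $r < \K$) belong to $\neighb(v)$, written as a linear inequality between $x_{v,r}$ and a sum of $x_{u,r'}$ over $u \in \neighb(v)$ and a window of ranks $r'$ around $r$. Substituting $\bar{x}_{v,r} = 1/n$, each constraint collapses to a comparison between a constant multiple of $1/n$ on the left-hand side---carrying a factor equal to the window width---and an aggregate on the right-hand side proportional to $\degr(v)$ times the same window width. The inequality therefore reduces to $\degr(v) \geq 1$, which holds for every $v$ because $\graph$ is connected by assumption.

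The step that will take most care is the bookkeeping at the rank boundaries, namely ranks $r < \K$ (and, if the formulation is symmetric in its use of windows, ranks near $n-1$), where the applicable window of predecessor ranks is truncated. I would check case-by-case that the LHS scaling diminishes in lockstep with the truncated window, so the reduction to $\degr(v) \geq 1$ still goes through. The hypothesis $\K \geq 2$ is what makes this boundary argument valid: with $\K \geq 2$, every truncated clique constraint still has enough aggregated fractional mass on the RHS (coming from multiple neighbour-rank pairs) to absorb the LHS, whereas for $\K = 1$ the clique constraints degenerate into edge-existence requirements on specific rank pairs, which the uniform solution cannot satisfy on every connected graph.
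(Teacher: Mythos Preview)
Your approach is exactly the paper's: both exhibit the uniform fractional solution $\bar{x}_{v,r} = 1/n$ and verify each constraint family, with the clique/predecessor constraints reducing to $|\neighb(v)| \geq 1$ via connectedness of $\graph$.

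One small correction to your final paragraph: the hypothesis $K \geq 2$ is not actually used anywhere in the argument, and your explanation for why it should be needed is off. For $K = 1$ the predecessor constraint reads $\sum_{u \in \neighb(v)} x_{u,r-1} \geq x_{v,r}$, and under $\bar{x}$ this is $|\neighb(v)|/n \geq 1/n$, which again holds by connectedness---the sum over $\neighb(v)$ is still there, so nothing ``degenerates into edge-existence requirements.'' There is also no boundary issue near rank $n-1$: the windows in $\IPVR$ look only backward, so only the low-rank constraints \eqref{IPvr:init_clique} are truncated, and as you correctly note, the right-hand side coefficient $r$ shrinks in lockstep with the window size there.
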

{\crevOne \begin{proof}
See {\crevNet Appendix} \ref{app:VRIP}. 
\end{proof}}

This observation can be taken as a sign of the $\IPVR$ model being weak. In fact, we observe in our computational experiments that especially for infeasible instances, a large number of branch-and-bound nodes are processed due to LP relaxations (and cuts driven on them) not being strong enough to prune infeasible branches early on.

On the other hand, we note that the clique digraph model (its relaxation) mostly suffers from the large number of ordered (unordered) cliques, and hits either the time or memory limit in our numerical experiments.

Lastly, we note that as mentioned in \cite{cassioli2015}, $\IPVR$ works better for feasible  instances, while $\IPCD$ works better for infeasible instances. However, none of them  scale well with the size of the input graph, which motivates our work on developing alternative formulations and study of the structure of DMDGP orders.
\section{Constraint Programming Models} \label{sec:CPmodels}

Constraint Programming (CP) is a natural approach to distance geometry ordering problems since{ \crevOne CTOP is a Constraint Satisfiability Problem (CSP) where the optimal solution is one that merely satisfies all constraints \cite{lecoutre2013constraint}. Unlike Constraint Optimization Problems where the optimal solution minimizes or maximizes some objective function subject to the constraints, CSPs do not have an objective function.} CP has been shown to work well for problems with a permutation structure \cite{Smith01dualmodels} and allows the leveraging of global constraints such as AllDifferent. To our knowledge, no CP model for CTOP has ever been proposed. The flexibility of CP allows for three possible formulations for CTOP.

The first formulation follows naturally from $\IPVR$, from \cite{cassioli2015}. We define integer variables $\rankvar_\vertind $ equal to the rank of vertex $\vertind \in \vertexset$ in the order.
\bsubeq
\label{form:rankDMDGP}
\begin{alignat}{2}
\rankDMDGP: \ & \text{ AllDifferent}(\rankvar_0,\rankvar_1, ..., \rankvar_{|\vertexset|-1}) \quad \label{rankDMDGP:alldiff} && \\ 
&|\rankvar_u - \rankvar_v| \geq \K+1  &&  \forall\  u,v  \in  \vertexset \text{ s.t. } u \neq v, \{u,v\} \notin \edgeset \label{rankDMDGP:clique}\\
&\rankvar_{\vertind} \in [n-1] && \forall \ \vertind \in  \vertexset \label{rankDMDGP:dom}
\end{alignat}
\esubeq
Using the global constraint AllDifferent \eqref{rankDMDGP:alldiff}, we enforce that each vertex has a unique rank. Together with the domain constraints, \eqref{rankDMDGP:dom}, this is equivalent to the one-to-one assignment constraints, in $\IPVR$, since each rank has a possible domain of $[n-1]$ and we are enforcing the constraint over all the rank variables which are indexed by the vertices, i.e., $|\vertexset| =  n$ variables. To enforce clique constraints, \eqref{rankDMDGP:clique}, we use the idea that if two vertices do not have an edge between them, they cannot be in the same $(\K+1)$-clique. In other words their ranks must have a difference of at least $\K+1$. This constraint completely models the clique constraints and the predecessor constraints since if their rank difference is $\leq \K $ then vertices $u \text{ and } v $ must be in the same clique which contradicts there being no edge between them. {\crevOne We note constraints \eqref{rankDMDGP:clique} can also be expressed using a minimum distance constraint, however our experiments show that the constraints as stated above had better performance.}

Secondly, we present what is called a dual formulation in CP \cite{Smith01dualmodels}, where the values and variable meanings are swapped. Let integer variable $\vertvar_{\rankind} $ represent the vertex in position $r$ of the order and let $\text{table}(\edgeset)$ denote the table which contains a tuple for each edge in $\edgeset$.
\bsubeq
\label{form:vertexDMDGP}
\begin{alignat}{2}
\vertexDMDGP: \ & \text{AllDifferent}(\vertvar_{0}, \vertvar_{1}, .., \vertvar_{n-1}) \label{vertexDMDGP:alldiff} \qquad && \\ 
&\crevOne ({\vertvar_{i},\vertvar_{j}}) \in \text{table}(\edgeset)  &&  \crevOne \forall \ i\in [0, n-\K-1], j \in [i + 1 ,i+\K] \label{vertexDMDGP:cliquepred}\\
&\vertvar_{\rankind}\in [|\vertexset|-1] && \forall \ \rankind \in [n-1] \label{vertexDMDGP:dom}
\end{alignat}
\esubeq
In \eqref{vertexDMDGP:alldiff} we enforce that each rank has a unique vertex, again using AllDifferent.  To enforce the clique and predecessor constraints \eqref{vertexDMDGP:cliquepred}, we use the CP notion of \textit{table constraints}.  {\crevNet These are global constraints that can be seen as an extension of domain constraints for a set of decision variables, where only certain predefined value combinations are allowed for those variables. Explicitly capturing some relationships between variables, they lead to improved propagation in the CP solver.
In Constraints \eqref{vertexDMDGP:cliquepred}, in order to ensure the overlapping clique structure required, we enforce that for any pair of ranks that differ by at most $\K$, the vertices assigned to these positions in the order must be adjacent in $\graph$, thus they should correspond to the two  endpoints of an edge in $\graph$.} Finally, \eqref{vertexDMDGP:dom} enforces the domain of the variables.

The last CP model is the result of combining the rank and vertex models into a single model by \textit{channelling} the variables using an \textit{inverse constraint}. It uses the constraints for predecessors and cliques from both formulations. This is useful because redundant constraints may actually help CP solvers perform more inference and discover feasible solutions in a shorter amount of time. Having defined $v$ and $r$ variables as before, the combined model is as follows:
\bsubeq
\label{form:combDMDGP}
\begin{alignat}{2}
\combDMDGP: \ & \crevOne \text{AllDifferent}(\vertvar_{0}, \vertvar_{1}, .., \vertvar_{n-1}) \label{combDMDGP:valldiff}  \\ 
& \crevOne \text{AllDifferent}(\rankvar_0,\rankvar_1, ..., \rankvar_{|\vertexset|-1}) \quad \label{combDMDGP:ralldiff} \\
&|\rankvar_u - \rankvar_v| \geq \K+1 \quad && \forall\  u,v  \in  \vertexset \text{ s.t. } u \neq v, \{u,v\} \notin \edgeset \label{combDMDGP:rclique}\\
&\crevOne ({\vertvar_{i},\vertvar_{j}}) \in \text{table}(\edgeset)  &&  \crevOne \forall \ i\in [0, n-\K-1], j \in [i + 1 ,i+\K] \label{combDMDGP:vcliquepred}\\
&\text{inverse}(\rankvar,\vertvar) && \label{combDMDGP:inv}\\
&\rankvar_{\vertind} \in [n-1] && \forall \ \vertind \in  \vertexset \label{combDMDGP:rdom}\\
&\vertvar_{\rankind}\in [|\vertexset|-1] && \forall \ \rankind \in [n-1] \label{combDMDGP:vdom}
\end{alignat}
\esubeq
In this formulation, the inverse constraint \eqref{combDMDGP:inv} enforces the relation $(r_u = j) \equiv (v_j = u)$, which also makes the AllDifferent constraints in the vertex and rank models redundant. {\crevOne However we include them in the model, as when using the appropriate CP parameters they improve the results.}

\section{\cb Structural Analysis of CTOP} \label{sec:enhance}

In this section we present a {\cb study of the} structure of DMDGP orders that {\cb we believe can lead to improvements for the} formulations presented in Section \ref{sec:CPmodels}. {\cb We present these structural findings as} checks for infeasible instances,  procedures for reducing the domains and breaking symmetries in DMDGP orders, and a class of valid inequalities.

\subsection{Infeasibility Checks}

We begin the discussion of {\cb the structure of} CTOP by introducing some simple checks which will immediately indicate if an instance $\instance$ is infeasible. The first check arises from the fact that every vertex needs at least $\K$ neighbours to be a part of a $(\K+1)$-clique. 

\begin{infeas}[Minimum Degree] \label{inf:mindeg}
	
	Given \instance, if $\exists \ \vertind \in \vertexset \text{ such that } \degr(\vertind) < \K$ then $\graph$ does not have a DMDGP order for $\K$.
	
\end{infeas}

{\crevOne Similarly, it is possible to determine a lower bound on the number of edges in $\graph$.
\begin{prop}
	\label{edge_min}
	Given \instance, the minimum number of edges in $\graph$ to have a DMDGP order is  $\left (|\vertexset| - \frac{1}{2} \right )\K - \frac{1}{2} \K^2$.
\end{prop}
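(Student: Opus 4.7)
The plan is to prove the bound by a careful edge-counting argument based on any DMDGP order, and then to exhibit an extremal graph showing the bound is tight.

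First, assume $G$ has a DMDGP order $v_0, v_1, \ldots, v_{n-1}$. I would charge each edge to its later endpoint: that is, for each edge $\{u, v\}$ with $u$ preceding $v$ in the order, assign the edge to $v$. Let $d_k$ denote the number of predecessors of $v_k$ to which $v_k$ is adjacent. Then $|\edgeset| = \sum_{k=0}^{n-1} d_k$, because every edge is assigned to exactly one endpoint. By Definition~\ref{prop:key} (or equivalently points (i) and (ii) of the problem definition), $v_k$ for $k \in [0, K-1]$ lies in the initial clique and must be adjacent to each of the preceding $k$ vertices of that clique, so $d_k = k$. For $k \in [K, n-1]$, $v_k$ together with $v_{k-K}, \ldots, v_{k-1}$ forms a $(K+1)$-clique, so $v_k$ is adjacent to at least these $K$ immediate predecessors; hence $d_k \geq K$.

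Combining these bounds and simplifying gives
\[
|\edgeset| \;\geq\; \sum_{k=0}^{K-1} k \;+\; \sum_{k=K}^{n-1} K \;=\; \binom{K}{2} + (n-K)K \;=\; \left(n - \tfrac{1}{2}\right)K - \tfrac{1}{2}K^2,
\]
which is the required lower bound with $n = |\vertexset|$.

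To show tightness, I would construct an extremal instance achieving equality. Take $\vertexset = \{v_0, v_1, \ldots, v_{n-1}\}$ and let $\edgeset$ consist of exactly the edges $\{v_i, v_j\}$ with $0 \leq j - i \leq K$ (i.e., each vertex is joined only to the (up to) $K$ vertices immediately before and after it in the listing). The first $K$ vertices form a clique, and for every $k \geq K$ the vertex $v_k$ is adjacent precisely to $v_{k-K}, \ldots, v_{k-1}$, so $\langle v_0, v_1, \ldots, v_{n-1}\rangle$ is a DMDGP order. A direct count using the same charging scheme gives $|\edgeset| = \binom{K}{2} + (n-K)K$, matching the bound exactly.

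I do not anticipate any serious obstacle; the only point requiring a little care is making sure no edge is double-counted in the lower-bound argument (handled cleanly by charging each edge to its later endpoint) and verifying that the $K$ edges charged to each $v_k$ with $k \geq K$ are genuinely new (they are, since each such edge has $v_k$ as its later endpoint and the charging is injective).
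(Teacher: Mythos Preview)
Your proof is correct and slightly cleaner than the paper's. The paper argues via minimum vertex degrees: it observes that the vertex at rank $r$ must lie in a certain number of overlapping $(K+1)$-cliques, reads off the minimum degree at each rank ($K, K+1, \ldots, 2K-1, 2K, \ldots, 2K, 2K-1, \ldots, K$), sums these, and divides by two to avoid double-counting. Your charging-to-the-later-endpoint argument sidesteps the division entirely and needs only the trivial observation that $v_k$ has at least $\min\{k,K\}$ adjacent predecessors, which makes the computation shorter and the bookkeeping more transparent. You also supply the tightness construction (the ``interval'' graph $\{v_i,v_j\} \in \edgeset \iff |i-j|\le K$), which the paper's proof omits; since the proposition asserts the \emph{minimum} rather than merely a lower bound, your version is in fact more complete.
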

\begin{proof}
	See  \ref{proof_edgemin}.
\end{proof}
Proposition \ref{edge_min} leads to a second check for infeasibility.}


\begin{infeas}[Minimum Edges] \label{inf:minedge}
	Given \instance, if $|\edgeset| < \left (|\vertexset| - \frac{1}{2} \right )\K - \frac{1}{2} \K^2$ then this instance is infeasible.
	
\end{infeas}

\begin{ex}
	The graph in Figure \ref{fig:infeasK3} is infeasible with $\K=2$ and $\K = 3$. For $\K=2$, the instance passes Infeasibility Check \ref{inf:mindeg} as every vertex has at least two neighbours. However Infeasibility Check \ref{inf:minedge} proves it is infeasible as the graph has $|\edgeset| = 8$ and the minimum number of edges for $\K=2$ is $(6-0.5)2 - 0.5(2^2) = 9$. For $\K =3$, we can prove this instance is infeasible using Infeasibility Check \ref{inf:mindeg} since $\degr(v_4) = 2 $.
\end{ex}

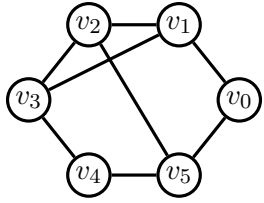
\begin{figure}[h]	
	\centering
	\begin{tikzpicture}[main_node/.style={circle,fill=white!80,draw,inner sep=0pt, minimum size=16pt, scale = 1},
	line width=1.2pt,  scale = 1]
	\node[main_node] (v1) at (3,1) {$v_0$};
	\node[main_node] (v2) at (2.2,2) {$v_1$};
	\node[main_node] (v3) at (1,2) {$v_2$};
	\node[main_node] (v4) at (0.2,1) {$v_3$};
	\node[main_node] (v5) at (1,0) {$v_4$};
	\node[main_node] (v6) at (2.2,0) {$v_5$};
	\draw[-] (v1) --  (v2);
	\draw[-] (v1) --  (v6);
	\draw[-] (v2) -- (v3);
	\draw[-] (v2) --  (v4);
	\draw[-] (v3) -- (v4);
	\draw[-] (v5) --  (v6);
	\draw[-] (v3) --  (v6);
	\draw[-] (v4) --  (v5);
	\end{tikzpicture}
	\caption{A graph which is infeasible for CTOP with $\K=2$  and $\K=3$.}\label{fig:infeasK3}
\end{figure}

Cassioli et al.\ \cite{cassioli2015} establish a lower bound on the degree of a vertex, which depends on its position in the order.  If a vertex has degree $\K$ then it can only be placed in the first or last position, since it can only appear in a single $(\K+1)$-clique. If there are more than two vertices with degree exactly $\K$ the instance must be infeasible as there are only two available positions for these vertices. Similarly, there are four positions available for a vertex with degree $\K+1$; ranks $0,1,n-2,n-1$, so if there are more than four vertices with degree $\K+1$, the instance is infeasible. This argument can be extended to the frequency of all vertices of degree strictly less than $2\K$. We formalize the argument of Cassioli et al.\ \cite{cassioli2015} as Infeasibility Checks. We introduce the set $\vertexset^{\degr[\K, \K+\delta]} = \{\vertind \in \vertexset ~| \ \degr(\vertind) \in [\K,  \K+ \delta]\}$ for some fixed positive integer $\delta$, to help express these arguments. We call vertices $\vertind$ with $\degr(\vertind) < 2\K$ \textit{small degree vertices} and vertices $\vertind$ with $\degr(\vertind) \geq 2\K$ \textit{large degree vertices}.

\begin{infeas}[Upper Bound on Small Degree Vertices] \label{inf:UB}
	Given an instance \instance,
	if $\exists \ \delta \in [\K-1]$ such that $\left |\vertexset^{\degr[\K, \K+\delta]} \right | \geq 2(\delta +1) +1$ then this instance is infeasible.
\end{infeas}
\begin{ex}
	Consider the graph in Figure \ref{fig:infeasUB} with $\K=3$. We will see that it is infeasible by  Infeasibility Check \ref{inf:UB}. Note that this instance cannot be proved infeasible by Infeasibility Check \ref{inf:mindeg} but can be proved infeasible by Infeasibility Check \ref{inf:minedge}. 
	Since $\K=3$, we have $\delta \in [1,2]$. First let $\delta = 1$, we have 
	\[\vertexset^{\degr[3, 4]} = \{v_0, v_1, v_2, v_3, v_4\}\]
	and $\left|\vertexset^{\degr[3, 4]}\right| = 5$. The right-hand side of the expression in Infeasibility Check \ref{inf:UB} with $\delta = 1$ is
	\[2(1+1) +1 = 5\]
	and so we are able to say this instance is infeasible. 
\end{ex}

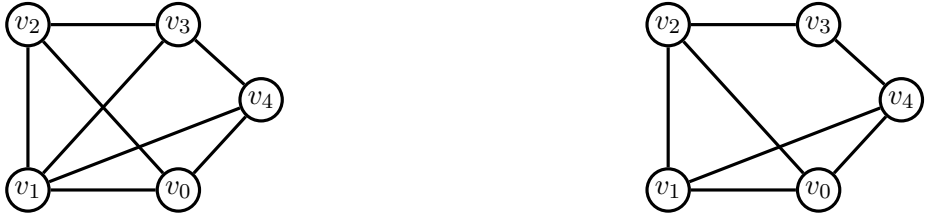
\begin{figure}[h]
	\begin{subfigure}{0.5\textwidth}
		\centering
		\begin{tikzpicture}[main_node/.style={circle,fill=white!80,draw,inner sep=0pt, minimum size=16pt, scale = 1},
		line width=1.2pt, scale = 1]
		\node[main_node] (v1) at (2,0) {$v_0$};
		\node[main_node] (v2) at (0,0) {$v_1$};
		\node[main_node] (v3) at (0,2.2) {$v_2$};
		\node[main_node] (v4) at (2,2.2) {$v_3$};
		\node[main_node] (v5) at (3.1,1.2) {$v_4$};
		\draw[-] (v1) -- (v2);
		\draw[-] (v1) --  (v3);
		\draw[-] (v1) -- (v5);
		\draw[-] (v2) -- (v3);
		\draw[-] (v2) --  (v4);
		\draw[-] (v2) --  (v5);
		\draw[-] (v3) --(v4);
		\draw[-] (v4) --  (v5);
		\end{tikzpicture}
		\caption{A graph that is infeasible for CTOP with $\K=3$.} \label{fig:infeasUB}
	\end{subfigure}
~
	\begin{subfigure}{0.5\textwidth}
		\centering
			\begin{tikzpicture}[main_node/.style={circle,fill=white!80,draw,inner sep=0pt, minimum size=16pt, scale = 1},
			line width=1.2pt, scale = 1]
			\node[main_node] (v1) at (2,0) {$v_0$};
			\node[main_node] (v2) at (0,0) {$v_1$};
			\node[main_node] (v3) at (0,2.2) {$v_2$};
			\node[main_node] (v4) at (2,2.2) {$v_3$};
			\node[main_node] (v5) at (3.1,1.2) {$v_4$};
			\draw[-] (v1) -- (v2);
			\draw[-] (v1) --  (v3);
			\draw[-] (v1) -- (v5);
			\draw[-] (v2) -- (v3);
			\draw[-] (v2) --  (v5);
			\draw[-] (v3) --(v4);
			\draw[-] (v4) --  (v5);
			\end{tikzpicture}
			\caption{A graph that is infeasible for CTOP with $\K=2$.} \label{fig:infeasLB}
	\end{subfigure}
	\caption{Two graphs that are infeasible for CTOP.}
\end{figure}

Similarly, we have a lower bound on the number of vertices with larger degree. Since the central $(n-2\K)$ vertices in the order are in at least $2\K$ cliques, they must all have degree of at least $2\K$, meaning there must be enough vertices with large degree to occupy these $(n-2\K)$ positions. 

\begin{infeas}[Lower Bound on Large Degree Vertices] \label{inf:LB}
	Given \instance, with $n \geq (2\K+1)$,
	if $\left |\vertexset^{\degr[2\K, n-1]} \right | \leq n-(2\K+1)$ then the instance  is infeasible.
\end{infeas}

\begin{ex}
	Consider the graph in Figure \ref{fig:infeasLB} with $\K=2$. We will see that it is infeasible by  Infeasibility Check \ref{inf:LB}. Note that this instance cannot be proven infeasible by Infeasibility Check \ref{inf:mindeg} or by Infeasibility Check \ref{inf:minedge}. We have $n = 5 = 2\K+1$ and so
	$\vertexset^{\degr[4, 4]} = \emptyset$
	thus  $\left|\vertexset^{\degr[4, 4]}\right| = 0$. We also have $n-2\K-1 =0$,	and so we are able to say this instance is infeasible. 
\end{ex}

\subsection{Domain Reduction}

We are able to exploit some structural characteristics of CTOP to help prune variable domains in the CP formulations. Let the domain of an integer variable $x$ be given by $\domain_x$.

First, we extend the lower bounds on the degree of a vertex given by Cassioli et al.\  \cite{cassioli2015} to set the domains for rank variables. As observed previously, a DMDGP order is a series of overlapping cliques of size (at least) $\K+1$. In the minimal case, the first and last vertices in the order are in exactly one clique, the second and second to last vertices are in two cliques, and so on. The central $(|\vertexset|-2\K)$ vertices are in at least $2\K$ cliques. From this we can infer the minimum number of neighbours required by a vertex at a given rank.  

\begin{pre}[Domain Reduction for Small Degree Vertices] \label{red:smalldeg}
	Given an instance \instance, we can define the domain for the rank variables as follows:
	\[\domain_{\rankvar_\vertind} = 
	\begin{cases}
	[\degr(\vertind)-\K] \cup [n-1-(\degr(\vertind)-\K), n-1] & \quad \text{if } \degr(\vertind) < 2\K \\
	[n-1] & \quad \text{otherwise }
	\end{cases}
	\]
\end{pre}

\begin{ex}
	Consider the graph in Figure \ref{fig:DMDGPK2} with $\K =2$. It has two vertices with degree strictly less than $2\K$: $v_0$ and $v_4$. By Domain Reduction Rule \ref{red:smalldeg} we can reduce the domain of both vertices so that their new domains are
	$\domain_{r_{v_{0}}} = \{0,1,4,5\} \ \text{and} \ \domain_{r_{v_{4}}} = \{0,5\}.$
\end{ex}

\begin{figure}[h]
	\centering
	\begin{tikzpicture}[main_node/.style={circle,fill=white!80,draw,inner sep=0pt, minimum size=16pt, scale =1},
	line width=1.2pt, scale =1]
	\node[main_node] (v0) at (2,3) {$v_0$};
	\node[main_node] (v1) at (3.4,1.5) {$v_1$};
	\node[main_node] (v2) at (2,0) {$v_2$};
	\node[main_node] (v3) at (0,0) {$v_3$};
	\node[main_node] (v4) at (-1,1.5) {$v_4$};
	\node[main_node] (v5) at (0, 3) {$v_5$};
	\draw[-] (v0) -- (v1) -- (v2) -- (v3) -- (v4) -- (v2) -- (v0) -- (v5) -- (v3) -- (v1) -- (v5) -- (v2);
	\end{tikzpicture}
	\caption{\crevNet A graph instance feasible for CTOP with $\K =2$, to illustrate Domain Reduction Rule 1.}\label{fig:DMDGPK2}
\end{figure}

We are able to extend domain reduction to the vertices that are adjacent to small degree vertices. The intuition is that if a vertex has small degree, the position of its neighbours cannot be too far from that vertex. If the position of a small degree vertex $\vertind^*$ has already been limited, its neighbours must be within the first or the last $ \degr(\vertind^*)$ vertices of the order since they are all connected to $\vertind^*$.

\begin{pre}[Domain Reduction for Neighbourhood of Small Degree Vertices] \label{red:smalldegneighb}
	Given an instance  \instance, with $n \geq (2\K+1)$,  for all $\vertind^* \in \vertexset^{\degr[\K,2\K-1]}$
	
	\[\domain_{\rankvar_\vertind} = [ \degr(\vertind^*)] \cup [n-1-\degr(\vertind^*), n-1] \quad \forall \ \vertind \in \neighb(\vertind^*).
	\]
\end{pre}

\begin{ex}
	Consider the graph in Figure \ref{fig:DMDGPK2_domred}  with $\K =2$. For Domain Reduction Rule \ref{red:smalldegneighb}, we have $ \vertexset^{\degr[2,3]} = \{v_0, v_7\}$. The neighbours of $v_7$ are $v_5$ and {\crevNet $v_4$. Since} they both have degree greater than $2\K$, their domains would not have been reduced by Domain Reduction \ref{red:smalldeg}. We reduce their domains as follows
	\[\domain_{r_{v_{5}}} = \domain_{r_{v_{4}}} =\{0,1,2,5,6,7\}.\]
	For the neighbours of $v_0$, we notice that $[\degr(\vertind^*)] \cup [n-1-\degr(\vertind^*), n-1] = [n-1]$, so we will not be able to reduce their domains.
\end{ex}
\begin{figure}[h]
	\centering
	\begin{tikzpicture}[main_node/.style={circle,fill=white!80,draw,inner sep=0pt, minimum size=16pt, scale =1}, scale =1,
		line width=1.2pt]
		\node[main_node] (v0) at (2,3) {$v_0$};
		\node[main_node] (v1) at (3,1.8) {$v_1$};
		\node[main_node] (v2) at (2,0) {$v_2$};
		\node[main_node] (v3) at (0,0) {$v_3$};
		\node[main_node] (v4) at (-1.3,1) {$v_4$};
		\node[main_node] (v5) at (-1.3, 2.3) {$v_5$};
		\node[main_node] (v6) at (0, 3) {$v_6$};
		\node[main_node] (v7) at (-2.6, 1.6) {$v_7$};
		\draw[-] (v0) -- (v1) -- (v2) -- (v3) -- (v4) -- (v2) -- (v0) -- (v6) -- (v3) -- (v1) -- (v6) -- (v2);
		\draw[-] (v5) -- (v6);
		\draw[-] (v5) -- (v4);
		\draw[-] (v4) -- (v6);
		\draw[-] (v5) -- (v3);
		\draw[-] (v5) -- (v7);
		\draw[-] (v7) -- (v4);
	\end{tikzpicture}
	\caption{\crevNet A graph instance feasible for CTOP with $\K =2$, to illustrate Domain Reduction Rule 2.}\label{fig:DMDGPK2_domred}
\end{figure}

\subsection{Symmetry Breaking}

As observed in \cite{cassioli2015}, reversing a DMDGP order also gives a DMDGP order. We establish that these are not the only symmetries present in DMDGP orders, and present strategies for breaking these symmetries. We begin by a simple condition to break the reverse symmetry. First, notice that if there is a single vertex that has degree $K$ without loss of generality we can fix its position to  $0$. If there is a second vertex with degree $K$ we can fix its position to $n-1$, noting that there are at most two vertices of degree $\K$ in a DMDGP order due to Infeasibility Check \ref{inf:UB}.

\begin{sym} [Degree $K$] \label{sym:degk}
	If $\vertexset^{\degr[\K,\K]} = \{v_i\}$, then let $r_{v_i} =0$. 
	If $\vertexset^{\degr[\K,\K]} = \{v_i, v_j\}$, then let $r_{v_i} =0$ and $r_{v_j} =n-1$.
\end{sym}


Next, we observe that if two vertices have the same neighbourhood excluding each other, they are interchangeable in the DMDGP order since they will have exactly the same contiguous predecessors. This guarantees a DMDGP order, since the only condition we need to meet to preserve the order if we interchange two vertices is ensuring that they have the appropriate contiguous predecessors. We call this symmetry \textit{pairwise symmetry}, which can be broken by imposing an arbitrary order on the pair of such symmetric vertices. Ideally, we would identify a large set of such vertices and order them. However, identifying such vertex sets can be computationally expensive. We instead identify two types of vertex sets that will allow for easy detection and breaking of pairwise symmetry. Specifically, we consider stable sets and cliques in the input graph.

\begin{sym} [Stable Set] \label{sym:ss}
	For a stable set $\stableset =\{v_1, v_2, \hdots, v_k\} \subseteq \vertexset$ such that $ \neighb(\vertind_i) = \neighb(\vertind_j) \  \forall \ \vertind_i, \vertind_j \in \stableset $  we enforce that $\rankvar_{\vertind_1} < \rankvar_{\vertind_2} < \cdots < \rankvar_{\vertind_k}$.
\end{sym}

\begin{sym} [Clique] \label{sym:clique}
	For a clique  $\clique=\{v_1, v_2, \hdots, v_k\}  \subseteq \vertexset$ such that $ \neighb(\vertind_i) \setminus \clique =\neighb(\vertind_j)\setminus \clique, \  \forall \ \vertind_i, \vertind_j \in \clique $  we enforce that $\rankvar_{\vertind_1} < \rankvar_{\vertind_2} < \cdots < \rankvar_{\vertind_k}$.
\end{sym}

In our experiments, we examine only cliques of size three or less, since we are usually unable to find large cliques satisfying Condition \ref{sym:clique}. Furthermore, we are able to conditionally extend these symmetry breaking conditions to include more vertices. Consider, for example, two vertices $v$ and $u$ whose neighbourhoods differ only by one vertex $w \in \neighb(v)$. If in the DMDGP order $w$ is at least $K+1$ positions away from $v$, the edge connecting them is not necessary to enforce precedence in the order, that is, $w$ is not an contiguous predecessor of $v$ and vice versa. In this case we can essentially consider $u$ and $v$ as having the same neighbourhood and so can impose symmetry breaking on them. For some set $\set \subseteq \vertexset$ we denote $\neighb(\set) = \cup_{v \in \set} \neighb(v) \setminus \set$, the set of all vertices, outside of $\set$ that are adjacent to a vertex in $\set$.

\begin{sym} [Extended Stable Set] \label{sym:ex_Ss}
	Let   $\stableset$ be a stable set meeting Condition \ref{sym:ss} or a single vertex not in any stable set meeting Condition \ref{sym:ss}.	For a vertex $\vertind \in \vertexset \setminus (\stableset \cup \neighb(\stableset))$ such that  $\neighb(\vertind)\setminus \neighb(\stableset) = \{w\}$ we enforce the logical constraints:
	\[|\rankvar_\vertind - \rankvar_w| \geq K+1 \implies \rankvar_\vertind < \rankvar_u \  \forall \  u \in \stableset.\]
	If we have enforced an ordering for $\stableset$ already, we need only add the constraint 
	\[|\rankvar_\vertind - \rankvar_w| \geq K+1 \implies \rankvar_\vertind < \rankvar_{v_1}.\]
\end{sym}

\begin{sym} [Extended Clique] \label{sym:ex_clique}
	Let $\clique$ be a clique meeting Condition \ref{sym:clique} or a single vertex not in any clique meeting Condition \ref{sym:clique}. 	For a vertex $\vertind \in \neighb(\clique)$ such that  $(\neighb(\vertind) \cup \{v\})\setminus (\neighb(\clique) \cup \clique) = \{w\}$ we enforce the logical constraints
	\[|\rankvar_\vertind - \rankvar_w| \geq K+1 \implies \rankvar_\vertind < \rankvar_u \  \forall \  u \in \clique.\]
\end{sym}

Finally, if we have not been able to break any symmetry via any of the previous ways we can arbitrarily choose two vertices and impose an order on them. 

\begin{sym} [Arbitrary]\label{sym:arb}
	For any $v_1, v_2 \in \vertexset$ enforce that $r_{v_1} < r_{v_2}$.
\end{sym}

An example of the strength of symmetry breaking with $\K =2$ can be found in {\crevNet Appendix} \ref{Exapp}.

In our experiments, we {\crev first test for} Symmetry Breaking Conditions \ref{sym:ss} {\crev and \ref{sym:clique} and then for Symmetry Breaking Conditions \ref{sym:ex_Ss} and \ref{sym:ex_clique}, breaking the symmetry where possible}. If none of these conditions are met, we {\crev test for symmetry using}  Symmetry Breaking Condition \ref{sym:degk}, because it is unlikely we will have a vertex with degree exactly $\K$ if $n$ is large. Finally, if all previous Symmetry Breaking Conditions have failed, we {\crev break the symmetry using}   Symmetry Breaking Condition \ref{sym:arb}. 

\subsection{A Class of Valid Inequalities}
\label{sec:validineqs}
Next, we {\cb develop the structure of a class of} valid inequalities for {\cb CTOP. We note that the focus of this study is not the practical application of these valid inequalities but the underlying structural analysis that leads to them}. We proceed with the following intuition: if we identify some subset $\set \subseteq \vertexset$ such that the induced subgraph of $\set$ does not have a DMDGP order, the entire set $\set$ cannot appear consecutively in the order. 

If for a given instance, $\instance$, we are able to identify subsets $\set \subseteq \vertexset$ whose induced graphs, $\graph[\set]$, do not have DMDGP orders for $\K$, we can add cuts to enforce that the difference between the maximum rank and the minimum rank of any element in $\set$ is at least $|\set|$. Let  $r_{max}, r_{min}$ denote the maximum rank and the minimum rank of any vertex  in $\set$, respectively. The valid inequality is
\begin{equation}
\label{eq:scut}
r_{max} - r_{min} \geq |\set|.
\end{equation}

We can improve this cut by examining the vertex in $\set$ with the smallest degree in the induced subgraph. {\crev  Let $\edgeset[\set]$ be the edge  set of $\graph[\set]$, and let  }$\deltamiss_\set(v) = |\{ u \in \set \setminus \{v\}  \ | (v,u) \notin \edgeset[\set]\}|$, i.e., the number of edges with one endpoint at $v \in \set$ missing from $\graph[\set]$ and let $ \deltamiss_\set= \max_{v \in \set} \deltamiss_\set(v)$. If $ \K > |\set| - \deltamiss_\set$, the difference between the maximum rank and the minimum rank must be greater than $ \deltamiss_\set+ \K$, because the $v \in \set $ which has $\deltamiss_\set$, cannot be in a clique with $\deltamiss_\set$ of the vertices in $\set$, so we need at minimum $\deltamiss_\set$ extra vertices between the vertices of $\set$ in the order. Otherwise, if $\K \leq |\set| - \deltamiss_\set $, the difference in ranks must be greater than $|\set|$ which is the inequality \eqref{eq:scut}. So, the valid inequality is
\begin{equation}
\label{scut1}
r_{max} - r_{min}  \geq \max \{ |\set|,  \deltamiss_\set+K\}.
\end{equation}

\begin{ex}
	For the graph in Figure \ref{fig:eg_graph} with $\K=3$, consider $\set = \{v_0, v_3, v_4, v_5\}$ where $\graph[\set]$ has no DMDGP order since $|\edgeset[\set]| = 4 $ and Infeasibility Check \ref{inf:minedge} gives a lower bound of $6$ edges for a DMDGP order.
	We have 
	\[ \deltamiss_\set = \max \{\deltamiss_\set (v_0), \deltamiss_\set (v_3),  \deltamiss_\set (v_4), \deltamiss_\set (v_5) \}\]
	\[ \deltamiss_\set = \max \{2, 0, 1, 1\}\]
	so we have $\max \{ |S|,  \deltamiss_\set  +K\} = \max\{4, 2+3\} = 5$. Thus \eqref{scut1} gives:
	\[r_{max} - r_{min}  \geq 5\]
	which is stronger than the original \eqref{eq:scut},
	\[r_{max} - r_{min}  \geq 4.\]
	Note that this may not have been the case with a different choice of $\set$.
	\begin{figure}[h]
	\centering
	\begin{tikzpicture}[main_node/.style={circle,fill=white!80,draw,inner sep=0pt, minimum size=16pt, scale = 1},
	line width=1.2pt, scale = 1]
	\node[main_node] (a) at (0,0) {$v_0$};
	\node[main_node]  (b) at (1,3) {$v_1$};
	\node[main_node]  (c) at (1.5,1)  {$v_2$};
	\node[main_node]  (d) at (4,0) {$v_3$};
	\node[main_node]  (e) at (4,2) {$v_4$};
	\node[main_node]  (f) at (6,1) {$v_5$};
	\draw[-] (a) -- (b) (a) -- (c) (a) --(d) 
	(b) -- (c) (b) -- (d) (b) -- (e)
	(c) -- (d) (c) -- (e) 
	(d) -- (f) -- (e)
	(b) -- (c) -- (f)
	(d) --(e);
	\end{tikzpicture}
	\caption{Graph with DMDGP order $\Lorder v_5, v_4, v_3, v_2, v_1, v_0 \Rorder$ for $\K=3$. } \label{fig:eg_graph}
\end{figure}
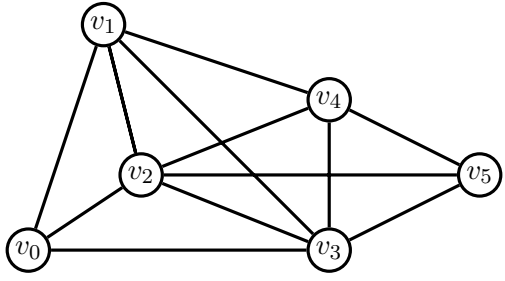
\end{ex}

The task of finding subsets of vertices $\set$ so that the subgraph induced by $\set$ does not have a DMDGP order is as difficult as determining if the whole graph has a DMDGP order. Thus, we would like to find sets of vertices with the most edges missing in their induced subgraph. As the sets with the most missing edges are stable sets, we can consider stable sets in $\graph$  as candidate $\set$ sets. For any stable set $\stableset$, no pair of vertices can appear in the same $(\K+1)$-clique. Thus, each pair of vertices in $\stableset$ needs to have a difference in their ranks of at least $\K+1$, meaning the minimum rank and maximum rank must have a difference of $(|\stableset| -1)(\K + 1)$. The inequality becomes
\begin{equation}
\label{scut2}
r_{max} - r_{min}   \geq (|\stableset| -1)(K+1).
\end{equation}

\begin{ex}
	Using  the graph from Figure \ref{fig:eg_graph} and $\K=3$, consider $\stableset = \{v_0, v_5\}$. The inequality \eqref{scut2} is:
	\[r_{max} - r_{min}  \geq  (2-1)(3+1)  \]
	\[r_{max} - r_{min}  \geq  4 \]
	
\end{ex}

This observation also yields a simple check for infeasibility.
\begin{infeas}
	\label{inf:maxSS}
	Given $\instance$, if the size of the maximum stable set in $\graph$  is greater than $\frac{n}{\K+1} + 1$, we can immediately say  $\graph$ does not have a DMDGP order with $\K$.
\end{infeas}

\begin{prop}
	\label{prop_incompare}
	The inequalities \eqref{scut1} and \eqref{scut2} are incomparable. 
\end{prop}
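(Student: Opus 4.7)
The plan is to establish incomparability by exhibiting two scenarios: one in which \eqref{scut1} produces a strictly stronger bound than \eqref{scut2}, and one in which the reverse holds.

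For the first direction, I would reuse the example worked out immediately before the proposition, namely the graph in Figure~\ref{fig:eg_graph} with $\K=3$ and $\set = \{v_0, v_3, v_4, v_5\}$, where \eqref{scut1} yields $r_{max} - r_{min} \geq 5$. A short enumeration of non-edges in that graph shows its maximum stable set has size $2$, so every instantiation of \eqref{scut2} gives at most $(2-1)(\K+1) = 4 < 5$; no application of \eqref{scut2} can match the strength of this \eqref{scut1} cut.

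For the opposite direction, I would compare the two inequalities on a single stable set $\stableset$ of size $s \geq 3$, realized for example as one part of the complete bipartite graph $K_{3,3}$ with $\K=2$. Algebraically, \eqref{scut2} gives $(s-1)(\K+1)$, whereas \eqref{scut1} applied to the same set has $\deltamiss_\stableset = s-1$ and therefore produces $\max\{s,\, s-1+\K\} = s + \K - 1$ when $\K \geq 1$. The difference $(s-1)(\K+1) - (s+\K-1) = \K(s-2)$ is strictly positive whenever $\K \geq 1$ and $s \geq 3$, so \eqref{scut2} strictly dominates here.

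The main subtlety I expect is in the second direction: to be fully rigorous I must rule out the possibility that some other subset $\set$ in the chosen instance yields a \eqref{scut1} bound matching $(s-1)(\K+1)$. To handle this cleanly, I would pick a minimal instance such as $K_{3,m}$ with $\K=2$ and $m$ sufficiently large, where the candidate subsets $\set$ whose induced subgraphs violate Infeasibility Check~\ref{inf:minedge} decompose naturally by their intersections with the two parts. A brief case analysis on those intersections then shows that every such $\set$ produces a \eqref{scut1} bound strictly smaller than $(m-1)(\K+1)$, completing the argument.
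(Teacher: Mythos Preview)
Your approach is essentially the same as the paper's: exhibit one example in which a \eqref{scut1} cut is strictly stronger than any \eqref{scut2} cut, and another in which a \eqref{scut2} cut is strictly stronger. The paper constructs two small ad hoc graphs $G_1$ and $G_2$ (both with $\K=3$) for the two directions, whereas you reuse the graph of Figure~\ref{fig:eg_graph} for the first and propose $K_{3,3}$ (or $K_{3,m}$) with $\K=2$ for the second; both sets of examples work.

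One remark on the ``subtlety'' you raise: the paper does \emph{not} carry out the extra analysis you outline. In its second direction it takes the stable set $\stableset_2=\{u_1,u_3,u_5\}$ in $G_2$, computes the \eqref{scut2} bound of~$8$, and then simply applies \eqref{scut1} to the \emph{same} set $\set_2=\stableset_2$ to obtain the bound~$5$, concluding that \eqref{scut2} dominates \eqref{scut1}. It does not rule out that some other choice of $\set$ might give a \eqref{scut1} bound of~$8$ or more. So your $K_{3,3}$ example with the direct comparison on the common stable set already matches the level of rigor in the paper; the additional $K_{3,m}$ case analysis you sketch would go beyond it.
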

\begin{proof}
{\crevOne	See \ref{proof_incompare}.}
\end{proof}

\begin{ex}
	Take the wheel graph $W_7$, as pictured in Figure \ref{fig:W7}, with $K=3$. The maximum stable sets are  $\{1,3,5\}$ and $\{2, 4, 6\}$ which have size $3$. The right-hand side of the inequality in Infeasibility Check \ref{inf:maxSS} is $\frac{7}{3+1} + 1 = 2.75 \leq 3$. Thus we can say immediately that this instance is infeasible. 
\end{ex}

\begin{figure}[h]
	\centering
	\begin{tikzpicture}[main_node/.style={circle,fill=white!80,draw,inner sep=0pt, minimum size=16pt, scale = 1},
	line width=1.2pt, scale = 1]
	\node[main_node] (center) at (0,0) {$v_0$};
	\foreach \phi in {1,...,6}{
		\node[main_node] (v_\phi) at (360/6 * \phi:1.4cm) {$v_\phi$};
		\draw[-] (v_\phi) -- (center);
	}
	\draw[-] (v_1) -- (v_2) -- (v_3) -- (v_4) -- (v_5) -- (v_6) -- (v_1);
	\end{tikzpicture}
	\caption{A wheel graph with seven vertices, $W_7$.} \label{fig:W7}
\end{figure}
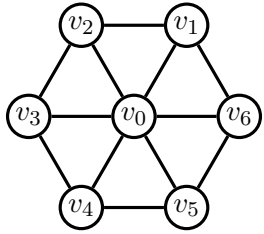

In fact, we are able to generalize this for all wheel graphs.

\begin{prop} \label{wheel}
	For any wheel graph, $W_n$, with $\K \geq 2$, if $n$ is odd and $n \geq \frac{\K+1}{\K-1}$ or if $n$ is even and $n \geq 2\frac{\K+1}{\K-1}$ there is no DMDGP order.
\end{prop}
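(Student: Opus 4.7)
The plan is to invoke the stable-set based infeasibility criterion (Infeasibility Check \ref{inf:maxSS}, equivalently the valid inequality \eqref{scut2}) applied to $W_n$. The reasoning rests on the observation that any stable set $\stableset$ must have its ranks pairwise at least $K+1$ apart in a DMDGP order, so the feasibility constraint $(|\stableset|-1)(K+1) \leq n-1$ must hold; a violation certifies infeasibility. Thus the whole task reduces to computing $\alpha(W_n)$ and comparing to this threshold.

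First I would identify the maximum stable set of $W_n$. Because the hub is universal, any stable set of size greater than one must avoid the hub and lie entirely inside the rim cycle $C_{n-1}$. Using the standard fact that $\alpha(C_m) = \lfloor m/2 \rfloor$, this gives $\alpha(W_n) = \lfloor (n-1)/2 \rfloor$ for all $n \geq 4$, which splits naturally by parity into $\alpha(W_n) = (n-1)/2$ when $n$ is odd and $\alpha(W_n) = (n-2)/2$ when $n$ is even.

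Second, I would substitute these two values into $(\alpha(W_n)-1)(K+1) > n-1$ and solve each resulting linear inequality for $n$. Since $K \geq 2$, division by $K-1 > 0$ preserves the direction of the inequality, and each case rearranges into a lower bound of the form $n \geq c\cdot\tfrac{K+1}{K-1}$ for a parity-dependent constant $c$, matching the thresholds given in the statement (one in the odd case, two in the even case). As an auxiliary route, for $K \geq 3$ the rim cycle is triangle-free whenever $n-1 \geq 4$, so $W_n$ contains no $(K+1)$-clique at all; this gives an immediate and much simpler infeasibility certificate and can be used to dispatch the large-$K$ regime cleanly.

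The main obstacle is the bookkeeping rather than any conceptual difficulty: one has to track the floor function in $\lfloor(n-1)/2\rfloor$ carefully, reconcile the strict versus non-strict forms of the stable-set inequality, and verify a few small boundary cases directly (for example $W_4$, $W_5$, and $W_7$) to confirm that each $n$ satisfying the stated bound is genuinely infeasible and that no slack in the stable-set bound undermines the claim near the threshold.
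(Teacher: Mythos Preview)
Your proposal is correct and follows essentially the same route as the paper: both arguments invoke the stable-set infeasibility criterion (Infeasibility Check~\ref{inf:maxSS}, derived from inequality~\eqref{scut2}), compute the independence number of $W_n$ by restricting to the rim cycle, split on the parity of $n$, and then rearrange the resulting linear inequality in $n$ using $K\geq 2$ to obtain the stated thresholds. Your auxiliary observation that $W_n$ contains no $(K+1)$-clique once $K\geq 3$ and $n\geq 5$ is a nice shortcut the paper does not mention, but it is supplementary rather than a different method.
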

\begin{proof} 
	{\crevOne See \ref{proof_wheel}.}
\end{proof}

Finally, we define these valid inequalities so that they may be added to the $\rankDMDGP$ and $\combDMDGP$ formulations. Given a stable set $\stableset \subseteq \vertexset$, and the rank variables $\rankvar_{\vertind}$ we have {\crevOne two options to implement the valid inequalities, as follows:
\begin{equation} \label{eq:VIabs}
\max\{\rankvar_{\vertind} | \vertind \in \stableset\} - \min\{\rankvar_{\vertind} | \vertind \in \stableset\}\geq (|\stableset|-1)(\K+1)
\end{equation}
\begin{equation} \label{eq:VIMinD}
\text{MinDistance}(\{\rankvar_{\vertind} | \vertind \in \stableset\},\K+1)
\end{equation}
where the constraint \eqref{eq:VIMinD} is a minimum distance constraint which ensures that all variables it acts upon take values at least $\K+1$ apart from each other.}
\section{Computational Results} \label{sec:results}

\subsection{Experimental Setup}
To solve the IPs we use the solver IBM ILOG CPLEX version 12.8.0 and to solve the CPs we use IBM ILOG CP Optimizer version 12.8.0. All models were implemented in C++ and run on MacOs with 16GB RAM and a 2.3 GHz Intel Core i5 processor, using a single thread. The default IP and CP parameters were used with the exception of in $\combDMDGP$ where extended inference was invoked on the AllDifferent constraints. We use $\K=3$ for all experiments as this is the value of $\K$ frequently used in applications. The time limit is set to $7200$ seconds. (We also tested the small random instances with $\K = 4$ and  $\K = 5$; their results are provided in Tables \ref{table:K4} 
and \ref{table:K5} 
of Appendix \ref{app:DMDGPApp}, 
respectively. We note the results were similar to those with $\K=3$.)

\subsection{Instances}
We perform our numerical experiments on a test data set consisting of randomly generated graphs\footnote{Random  instances can be found at \url{https://sites.google.com/site/mervebodr/home/DMDGP_Instances.zip}.} and pseudo-protein graphs\footnote{Pseudo-protein instances can be found at \url{https://gitlab.insa-rennes.fr/Jeremy.Omer/MinDouble}.}.

\paragraph{Random Instances} We divide the random test set into  \emph{small instances}, having $n \in \{ 20, 25, \hdots, 60\}$ and the expected edge density (measured as $\density = \frac{2|\edgeset|}{ n (n-1)} $) in $\{0.3, 0.5, 0.7\}$, and \emph{medium-sized instances} which have $n \in \{65,70 \hdots, 100\}$ and the expected edge density in $\{0.2, 0.3, 0.4, 0.5\}$.  For each $n$, density pair, we generate three graph instances using the dense\_gnm\_random\_graph() method in the NetworkX Python package \cite{networkx}, which chooses a graph uniformly at random from the set of all graphs with $n$ vertices and $m$ edges. {\crevOne These are  Erd\H{o}s-R\'{e}nyi random $G(n,M)$ graphs. } Table \ref{table:DMDGPinst} presents a summary of the instances. We remark that a portion of the instances were unsolved by any method; we denote these as \emph{unsolved} instances.

\begin{table}[h]
	\caption{Random graph instances.}
	\label{table:DMDGPinst}
	\centering
	\small
	\begin{tabular}{c c c c c c c } 
		\toprule
		& $n$ & $D$ & \# Instances &  \# Feasible & \# Infeasible & \# Unsolved \\
		\midrule
		\multirow{3}{*}{small} & \multirow{3}{*}{$20$-$60$} & 0.3 & 27 & 0 & 27 & 0 \\
		&  & 0.5 & 27 & 20 & 7 & 0 \\
		&  & 0.7 & 27 & 27 & 0 & 0 \\[0.5ex]
		\midrule
		Total & && 81 & 47 & 33 & 0\\
		\midrule
		\midrule
		\multirow{4}{*}{medium} & \multirow{4}{*}{$65$-$100$} & 0.2 & 24 & 0 & 24 & 0 \\
		&  & 0.3 & 24 & 0 & 10 & 14 \\
		&  & 0.4 & 24 & 13 & 0 & 11 \\
		&  & 0.5 & 24 & 24 & 0 & 0 \\[0.5ex]
		\midrule
		Total & && 96 & 37 & 34 & 25\\
		\bottomrule
	\end{tabular}
\end{table}
We remark that for the small instance data set, all $27$ graph instances with $D =  0.3$ were infeasible and all $27$ graph instances with $D = 0.7$ were feasible. When $D = 0.5$, we have $7$ instances which are infeasible, all of which have $n=20, n=25$ or $n=30$, the $20$ feasible instances have $n \geq 30$. For medium-sized instances, preliminary results showed all instances with $D > 0.5$ were feasible and solved in less than a second. For this reason we focus our study on instances with  $D \leq 0.5$, which give more insights into the solution methods. The $24$ instances with  $D = 0.2$ were infeasible and the $24$ instances with $D = 0.5$ were feasible. The instances with $D = 0.3$ and $D = 0.4$ were more difficult to solve. For instances with $D = 0.3$, we found $10$ infeasible instances, those with $n = 65,70,75,80$, with all others unsolved. For medium-sized instances with $D = 0.4$, $13$ instances were feasible with $11$ others unsolved. 

{\crevOne \paragraph{Pseudo-Protein  Instances} The pseudo-protein instances are similar to those used in \cite{Omer2017} and were provided by the authors of that paper. They are generated using existing instances of the Protein Data Bank, which are cut to the required number of vertices and then reduced in density by randomly removing edges. This test data set has $209$ instances with vertices in $\{30,40,50,60\}$ and expected edge density in $[0.03,0.12]$. We remark of these pseudo-protein instances $192$ are infeasible and $17$ are feasible.}

\subsection{Computational Results and Discussion}
In this section, we provide our observations based on a thorough computational study. All the detailed experimental result tables are provided in {\crevNet Appendix \ref{app:DMDGPApp}}. 
Here, we summarize our main findings by first outlining the results, without the addition of improvements from Section \ref{sec:enhance}, of the novel CP formulations as compared to the IP formulations from the literature. Then in Sections  \ref{sec:struct} and	\ref{sec:vi_comp} we present a preliminary study of the structural findings and valid inequalities, respectively. 

We begin by noting that the IP formulations from the literature do not perform well against the CP formulations, which can be seen in detail in Table \ref{table:smallExact} of Appendix \ref{app:DMDGPApp}. 
$\IPCD$ is able to solve instances with $D = 0.3$ and $n \leq 40$ in less than a second. These are also the instances that are infeasible. However, for higher densities, and as $n$ increases, $\IPCD$ either hits the time limit or memory limit with $50\%$ of the instances hitting the time or memory limit for $\IPCD$, due to the large number of ordered cliques for larger and more dense instances. On average the instances have $165,875$ ordered cliques, with the smallest number of ordered cliques being $48$ and the largest being $1,406,256$ cliques.  Table 
 {\crevNet \ref{table:CDrelax} 
 	of Appendix \ref{app:DMDGPApp}} 
 shows that $\Unorderedrelax$ has performance similar to $\IPCD$, performing best when the number of vertices is small and when the density is low. For $n \geq 30$,  $\Unorderedrelax$ is unable to solve any feasible instance (all of which have $D = 0.5$ or $D = 0.7$) without running out of time or memory. In fact, when the time limit is hit, we are still in the root node of the tree, for instances with $n\geq 50$. Even when we relax the ordering constraints on the cliques, we still have a large number of unordered cliques. The smallest number of unordered cliques for an instance is $2$ and the largest is $58,594$, on average an instance has $6,911$ unordered cliques. $\IPVR$ begins to hit the time limit at $n = 25$ and for $n \geq 35$ it is only able to solve one instance with $D < 0.7$. This confirms the observations of \cite{cassioli2015}, but we have also shown that neither $\IPCD$ nor $\IPVR$ scale well.  
 
We next compare our CP formulations with the vertex-rank IP formulation $\IPVR$ of \cite{cassioli2015}. In Figure \ref{fig:DMDGPResults}, we provide performance profiles for the solutions times of different models on small instances. Note that solution times are given in a logarithmic scale. We observe that the CP formulations all outperform $\IPVR$. For small feasible instances, as seen in Figure \ref{fig:smallFeas}, $\rankDMDGP$ is able to solve $29$ instances in less than a second. However, for feasible instances  $\combDMDGP$ and $\vertexDMDGP$ perform the best overall, with $\vertexDMDGP$ solving one more instances than $\combDMDGP$ within the time limit. The performance profile for small infeasible instances in Figure \ref{fig:SmallInfeas} shows that $\combDMDGP$ has the best performance on these instances.

\begin{figure}[h]
\hspace*{-0.7cm} 
	\begin{subfigure}{0.5\textwidth}
		\includegraphics[scale=0.23]{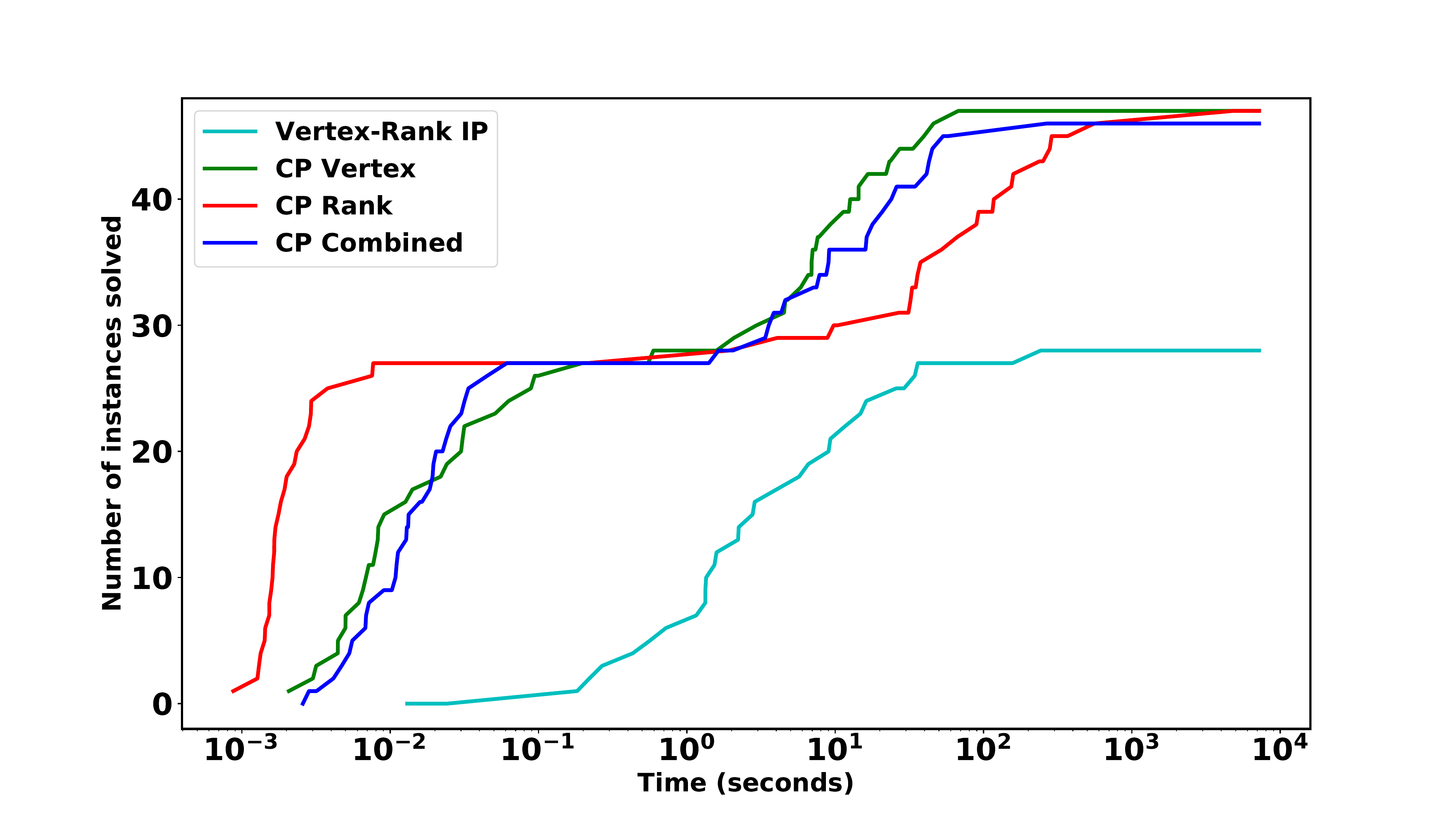}
		\caption{Small feasible instances}
		\label{fig:smallFeas}
	\end{subfigure}
\hspace*{0.1cm} 
	\begin{subfigure}{0.45\textwidth}
		\includegraphics[scale=0.23]{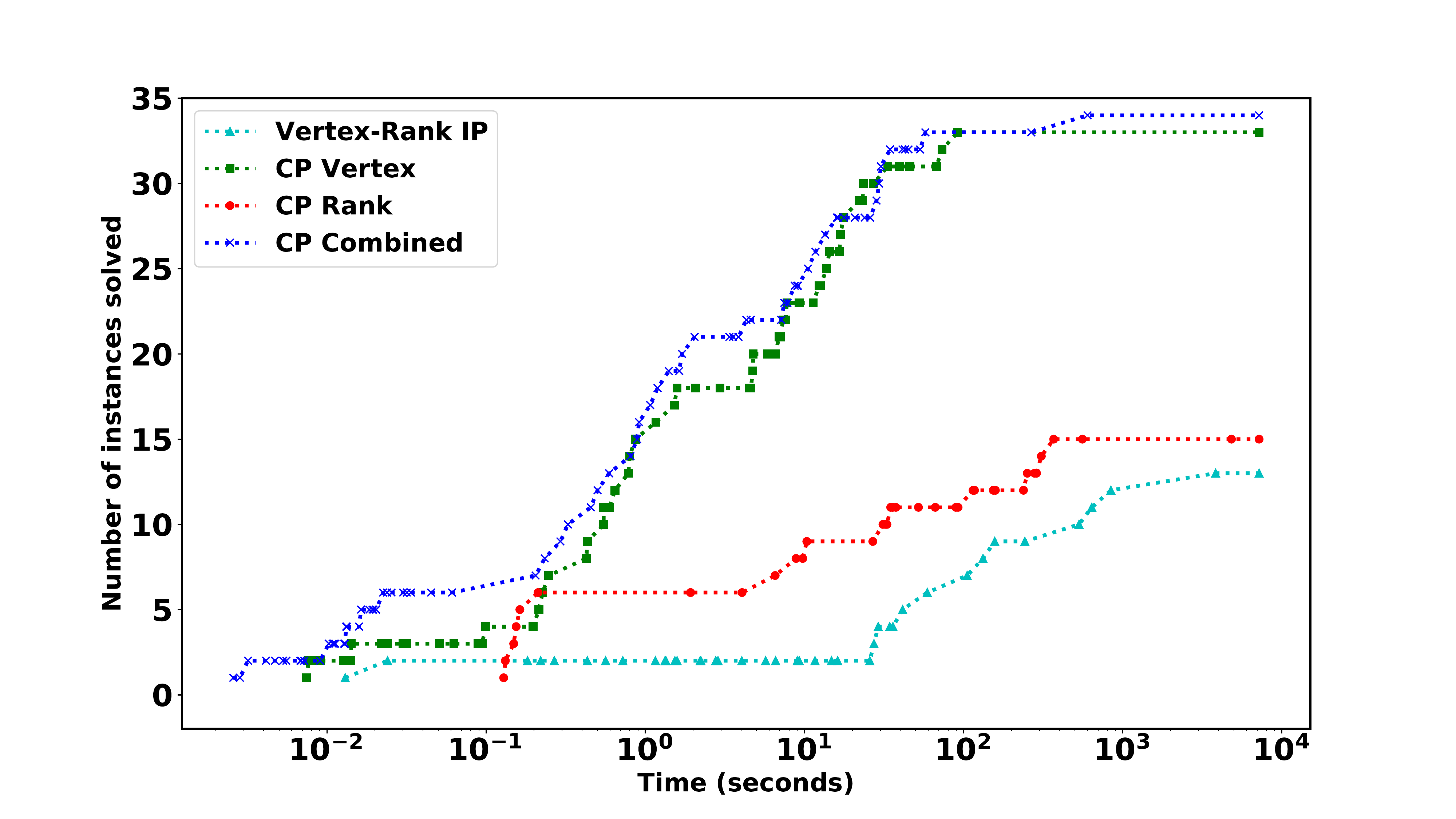}
		\caption{Small infeasible instances}
		\label{fig:SmallInfeas}
	\end{subfigure}
	\caption{Solution times of the models for small instances.}
	\label{fig:DMDGPResults}
\end{figure}

 Table 
 {\crevNet \ref{table:smallExact} 
 of	Appendix \ref{app:DMDGPApp}} 
 also reveals that $\rankDMDGP$ is able to solve instances with  $D = 0.7$ in less than a second, however, it begins to hit the time limit for $n \geq 35$ when $D = 0.3$. For $D = 0.5$, $\rankDMDGP$ is able to solve instances but is {\crevOne almost always} outperformed by $\combDMDGP$ and $\vertexDMDGP$. We also remark that after 25 vertices, {\crevOne with $D=0.5$}, the number of choice points for solving with $\rankDMDGP$ exceed one million {\crevOne for all but two instances}. 

Overall, we conclude lower density instances are more challenging for the CP models. However, high density ($D = 0.7$) is trivial even with $60$ vertices. For these reasons, we focus on densities less than or equal to $0.5$, but increase the {\crevOne number of densities considered}. Due to their poor performance, we exclude the IPs from further study. We now direct our focus to the CP models for medium-sized instances.

\begin{figure}[h]
\hspace*{-0.7cm} 
	\begin{subfigure}{0.5\textwidth}
		\includegraphics[scale=0.23]{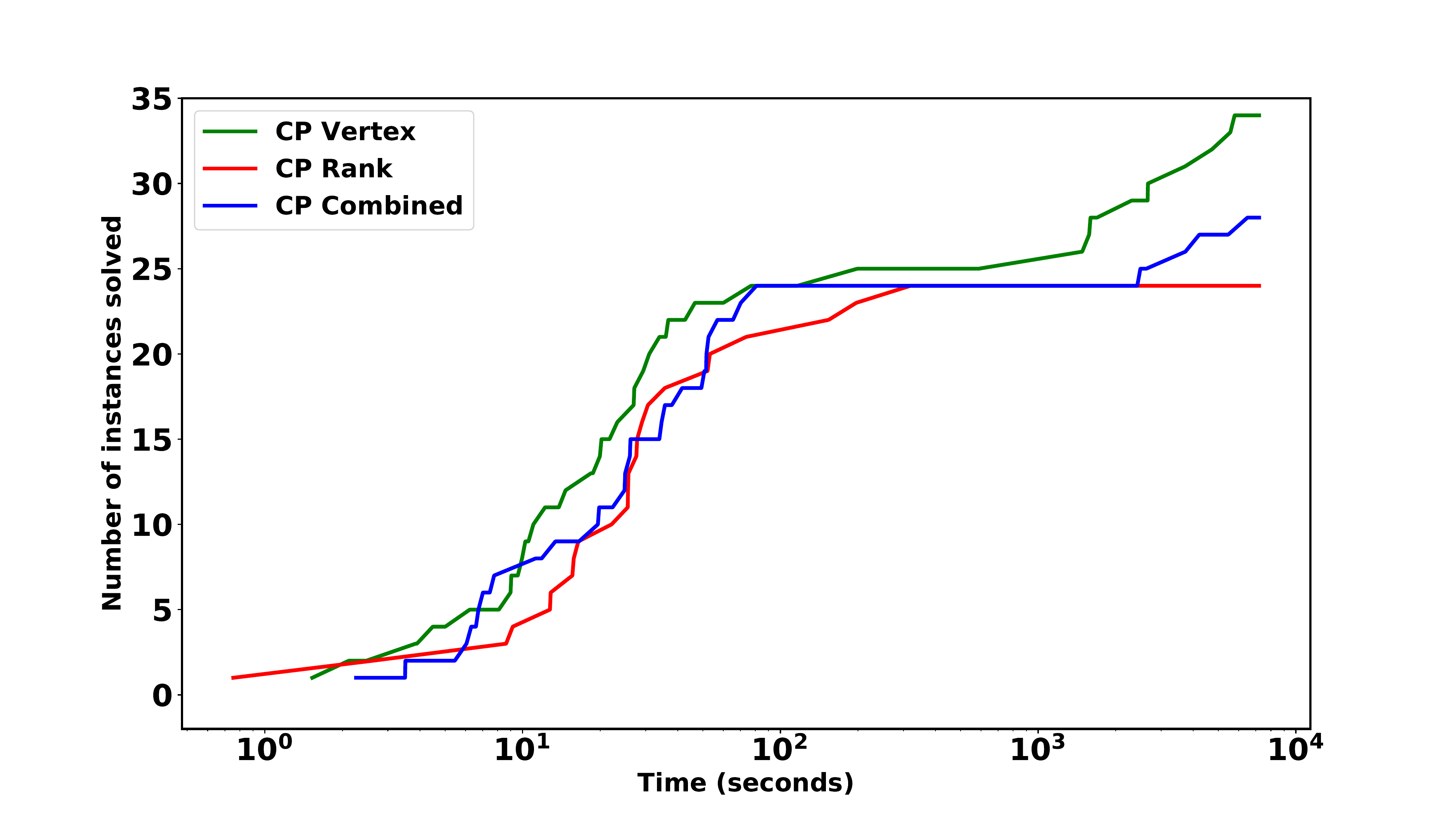}
		\caption{Medium-sized feasible instances}
		\label{fig:LargeFeas}
	\end{subfigure}
\hspace*{0.1cm} 	
	\begin{subfigure}{0.45\textwidth}
		\includegraphics[scale=0.23]{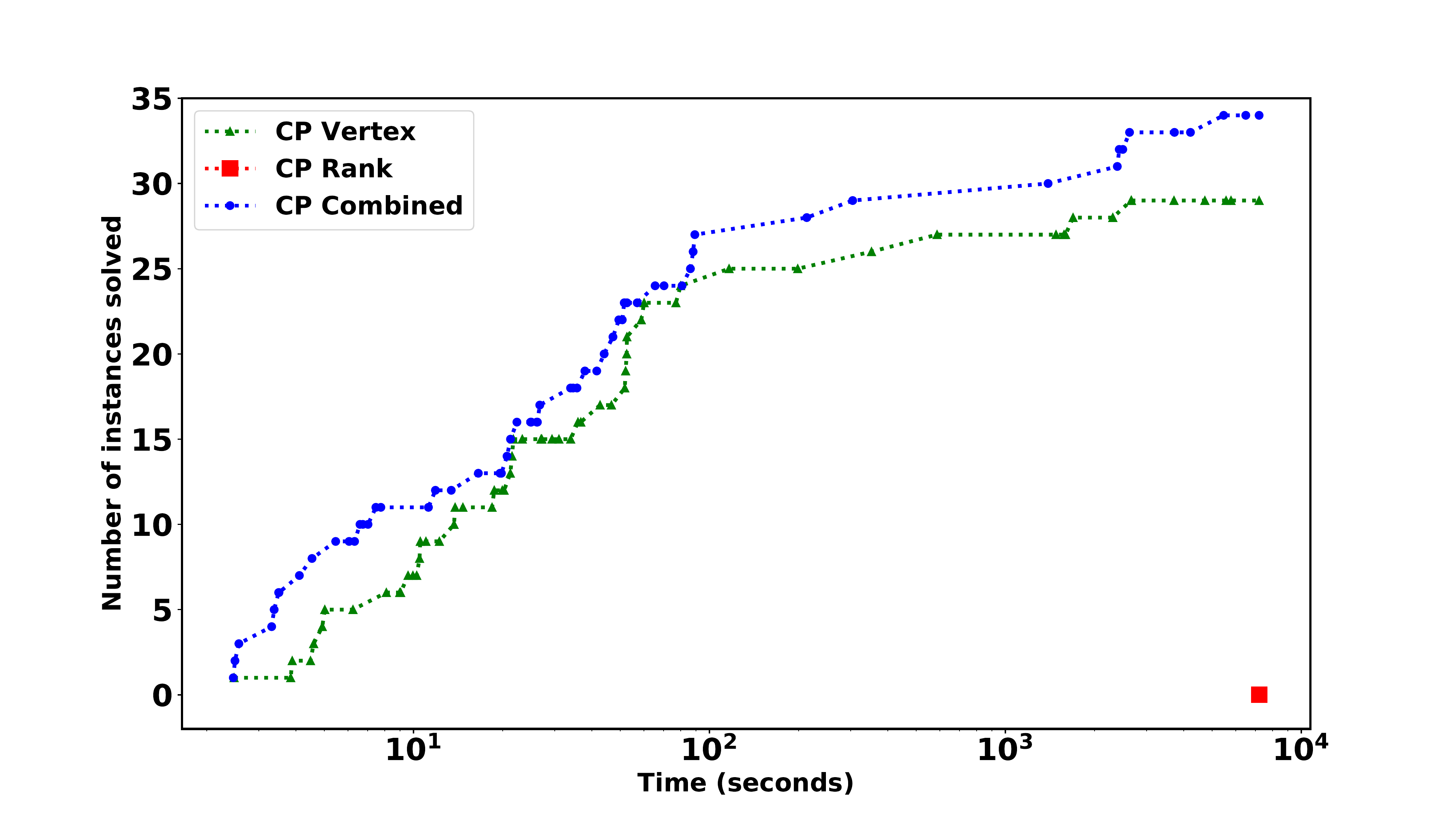}
		\caption{Medium-sized infeasible instances}
		\label{fig:LargeInfeas}
	\end{subfigure}
	\caption{Solution times of the {\crevOne constraint programming} models for medium-sized instances.}
	\label{fig:DMDGPLargeResults}
\end{figure}

The performance profiles for the solutions times of different models on medium-sized instances are given in Figure \ref{fig:DMDGPLargeResults} and the detailed results table can be found in Table 
	{\crevNet \ref{table:largeCP} 
	of	Appendix \ref{app:DMDGPApp}}. 
	We observe that $\rankDMDGP$ is outperformed by $\combDMDGP$ and $\vertexDMDGP$ in both the feasible and infeasible cases; it is unable to solve any infeasible instances. In the feasible case, $\vertexDMDGP$ solves $6$ more instances than $\combDMDGP$ within the time limit, however in the infeasible case,  $\combDMDGP$ solves $5$ more instances than $\vertexDMDGP $ within the time limit. Overall, $\vertexDMDGP$ is able to solve one more instance than  $\combDMDGP$ before the time limit is reached. } We note however, that for medium-sized instances {\crevOne $25$} of the instances were unsolved by any method. For these medium-sized instances, those with $D = 0.2$ or $D = 0.5$ were all solved in less than $2$ minutes. The  $D = 0.3$ and $D = 0.4$ instances are more difficult with only $13$ of $24$ instances with $D = 0.4$ solved and no instances with $D = 0.3$ and $n\geq 85$ solved. Thus there is still opportunity to improve the CP formulations.

{\crevOne Finally, {\crevNet we compare} the best performing CP formulations, $\combDMDGP$ and $\vertexDMDGP$, with $\IPVR$ on the pseudo-protein instances. As seen in Figure \ref{fig:ProtResults}, the results can be found in Tables
	{\crevNet \ref{table:prot30} to \ref{table:prot60} 
	of	Appendix \ref{app:DMDGPApp}}. 
	We observe that  $\vertexDMDGP$  and $\combDMDGP$ outperform $\IPVR$, which is only able to solve $39$ instances in total within the time limit.  $\combDMDGP$ has the best performance on these instances solving every instance in less than $32$ seconds. $\vertexDMDGP$ is unable to solve $9$ instances before the time limit is hit. We observe that of the instances $\vertexDMDGP$ is unable to solve, $8$ have $n \geq 40$ and they are those that have the highest density. Overall, we conclude that the CP formulations perform very well on pseudo-protein instances.}
\begin{figure}[h]
	\centering
	\hspace*{-0.7cm} 
		\includegraphics[scale=0.25]{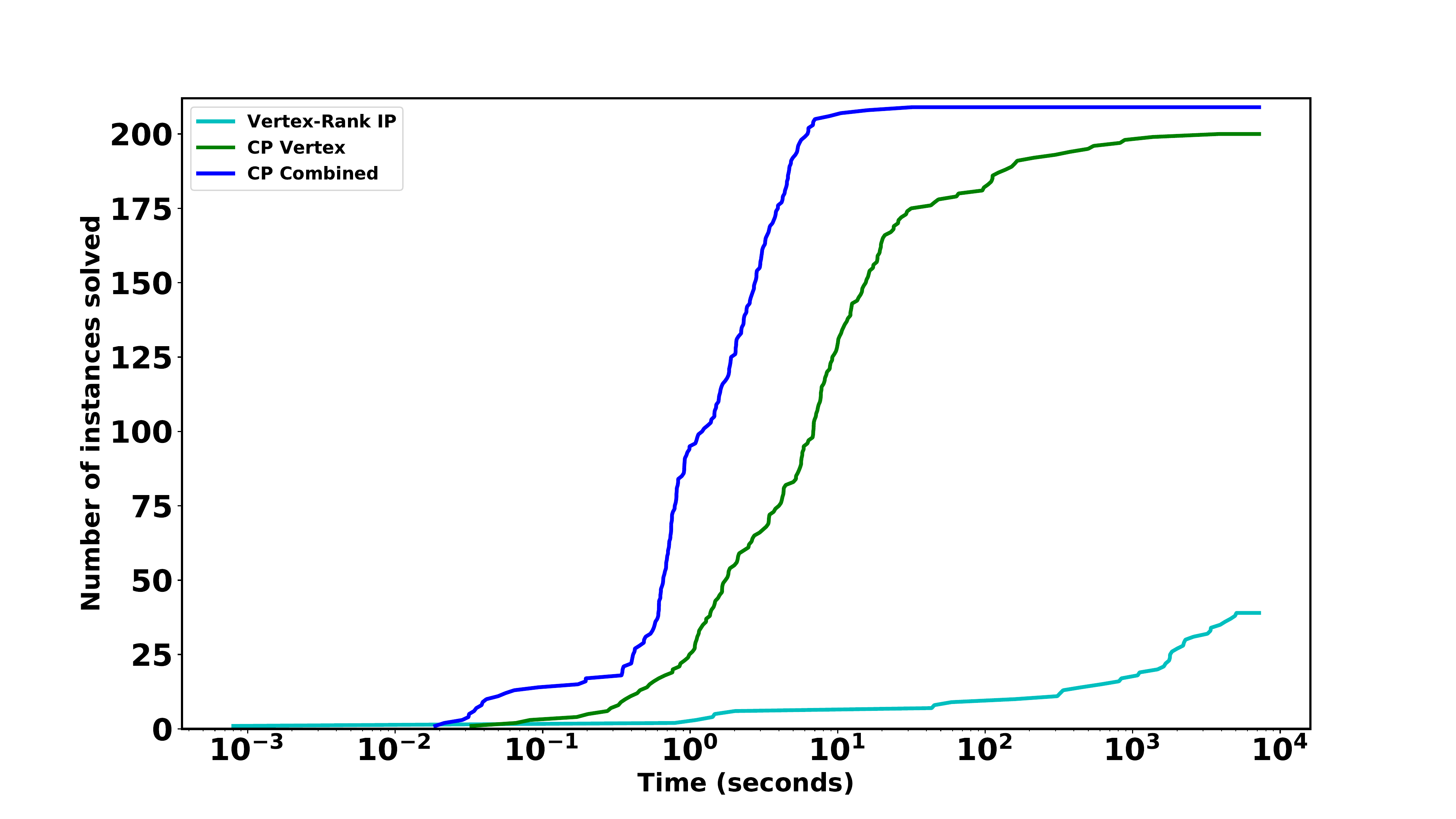}
	\caption{Solution times for pseudo-protein instances.}
	\label{fig:ProtResults}
\end{figure}

\subsection{\cb Preliminary Computational Study of  Structural Findings} \label{sec:struct}
{\cb In this section, we present a preliminary study of the structural findings of Section \ref{sec:enhance}. We implement them on $\combCP$ and test on the small random instances. {\crevNet In the worst case it takes 0.12 seconds to check for infeasibility and then apply domain reduction and symmetry breaking}.} For both the domain reduction and the symmetry breaking, we apply the infeasibility checks before solving. The infeasibility checks are able to prove four instances are infeasible without having to solve a mathematical program. 

Solution times for $\combCP$ {\crev on small instances} with domain reduction {\crev only} and the symmetry breaking  {\crev only}, as well as both {\crev domain reduction and symmetry breaking together,} and with no additions are shown in Figure \ref{fig:DMDGPEnResults}. {\crev In total, for the set of $81$ small instances, Infeasibility Check \ref{inf:mindeg}  was able to solve two instances and Infeasibility Check \ref{inf:UB} was also able to solve two instances. Domain Reduction Rule \ref{red:smalldeg} was applied to $12$ instances. Finally, Symmetry Breaking Condition \ref{sym:ex_Ss} was applied to $33$ instances, Symmetry Breaking Condition \ref{sym:ex_clique} was applied to $32$ instances, and Symmetry Breaking Condition \ref{sym:arb} was applied to $37$ instances.} {\crev None of the instances could be deemed infeasible using Infeasibility Checks \ref{inf:minedge} and \ref{inf:LB}. We were also unable to apply Domain Reduction Rule \ref{red:smalldegneighb}, and could not use Symmetry Breaking Conditions \ref{sym:degk},  \ref{sym:ss}, \ref{sym:clique} for these instances.}  

We present the results in Table 
{\crevNet \ref{table:enhance} 
		of Appendix \ref{app:DMDGPApp}}. 
We focus on small instances since we were unable to apply any of {\cb the findings} other than arbitrary symmetry breaking to the medium-sized instances. We also see that the solution time with the addition of some combination of the {\crevNet structural strategies} is decreased for $63$ of the $81$ instances. We believe this is because as $n$ increases,  an instance is less likely to have degree  less than $2\K$, since $\K$ is small with respect to $n$ and it is less likely that two vertices will have the same neighbourhoods. We observe that for feasible instances, {\crevOne symmetry breaking is slightly detrimental to the performance of $\combCP$ and that adding only domain reduction gives very similar results to no additions at all.  However, for infeasible instances, we observe that domain reduction alone improves upon $\combDMDGP$, and has similar performance to when both symmetry breaking and domain reduction are added to the model.}

\begin{figure}[h]
\hspace*{-0.7cm} 
	\begin{subfigure}{0.5\textwidth}
		\includegraphics[scale=0.23]{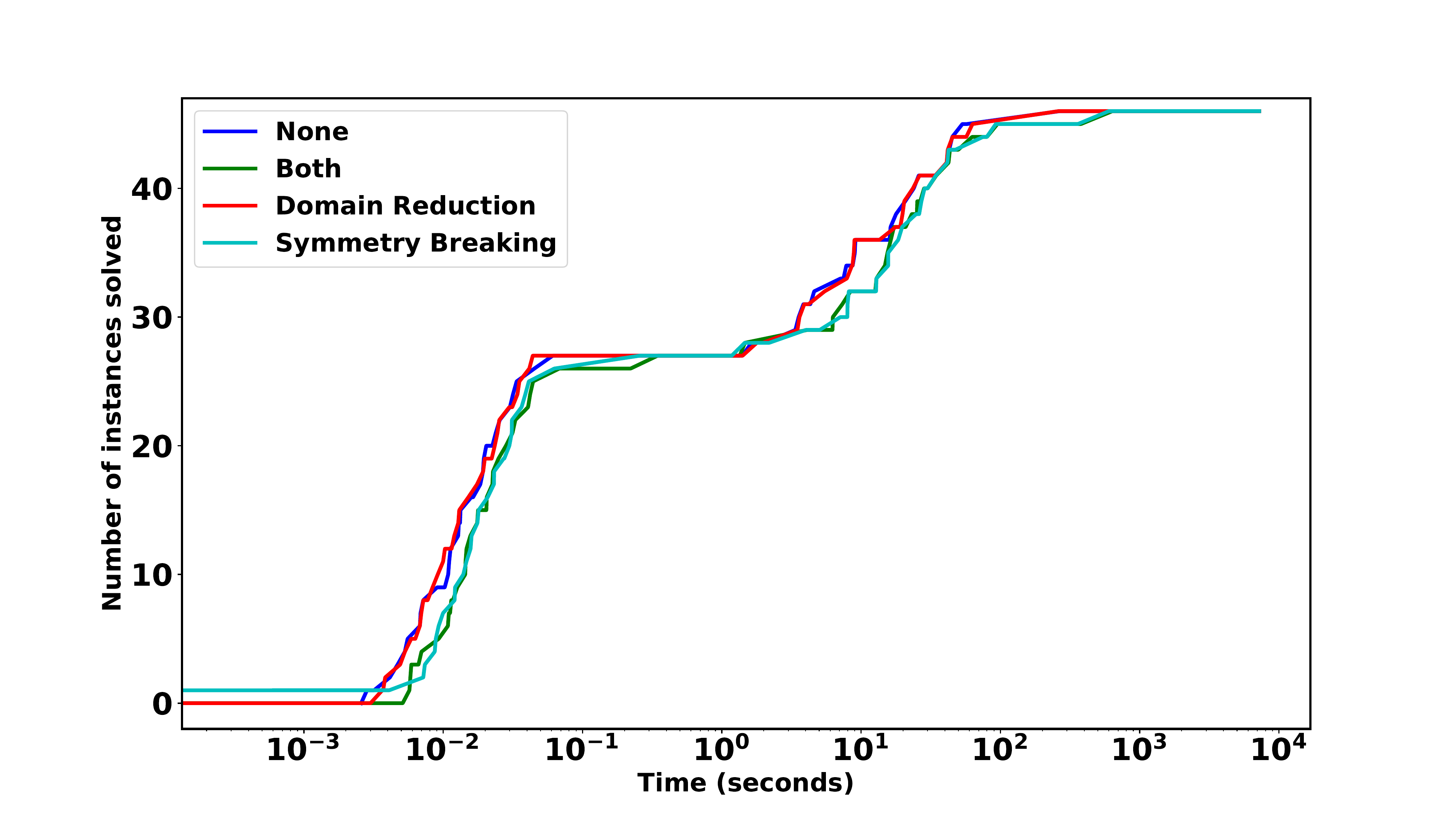}
		\caption{Small feasible instances}
		\label{fig:combinedDMDGPFeas}
	\end{subfigure}
\hspace*{0.1cm} 
	\begin{subfigure}{0.45\textwidth}
		\includegraphics[scale=0.23]{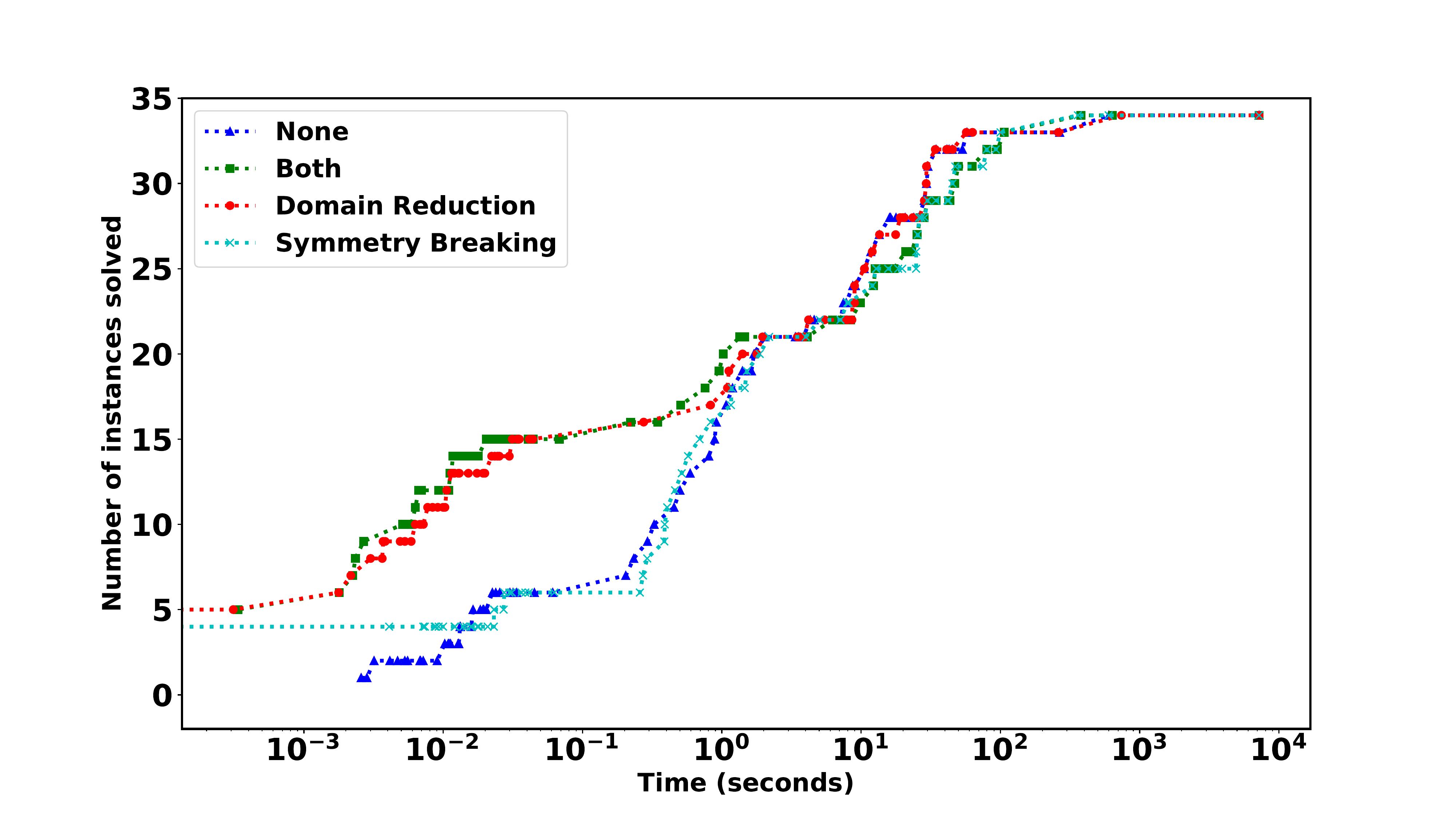}
		\caption{Small infeasible instances}
		\label{fig:combinedDMDGPInfeas}
	\end{subfigure}
	\caption{Solution times of the alternatives for structural findings.}
	\label{fig:DMDGPEnResults}
\end{figure}

For the majority of instances there exists an addition of one or more structural finding to the model that improves the solution time as compared to $\combDMDGP$ on its own. Thus we conclude that the structural findings are beneficial for this data set. We also see that as the instance size increases we are unable to find these key structures in the graph. This motivates the future work of incorporating these structural insights into the CP search tree for further propagation opportunities and apply them at every node of the search tree to some subset of the order.

\subsection{\cb Preliminary Computational Study of Valid Inequalities} \label{sec:vi_comp}

To provide insight into the class of valid inequalities using stable sets, \eqref{eq:VIabs} and \eqref{eq:VIMinD}, presented in Section \ref{sec:validineqs}, we study the impact of adding all possible valid inequalities to both $\rankDMDGP$ and $\combDMDGP$ before solving. We use the Networkx Python package to enumerate all maximal stable sets for each small instance; {\crevNet the total time to enumerate these sets is $2.55$ seconds}. We then add the valid inequalities  \eqref{eq:VIabs} or \eqref{eq:VIMinD} {\crevNet to the model as constraints} using these stable sets and compare against the formulations without the valid inequalities. {\cb We remark that this is by no means a complete study of the impact of the valid inequalities but is meant to provide insight that even with a naive implementation there are cases where the valid inequalities are useful.} The complete results table can be found in Table 
{\crevNet \ref{table:VI}  of 
	Appendix \ref{app:DMDGPApp}}

Figure \ref{fig:VIFeas} shows the results for small feasible instances. We observe that adding the valid inequalities was only slightly detrimental for both  $\rankDMDGP$ and $\combDMDGP$ and that the inequalities \eqref{eq:VIabs} have better performance than the inequalities \eqref{eq:VIMinD}. In the infeasible case, Figure \ref{fig:VIInfeas}, both forms of the valid inequalities increase the number of instances solved by  $\rankDMDGP$ within the time limit and the speed at which they were solved. For  $\combDMDGP$ on {\crev the smallest} infeasible instances, the formulations with valid inequalities \eqref{eq:VIabs}  and  \eqref{eq:VIMinD}  dominate the formulation without. In fact, the formulations with valid inequalities are able to solve some of these instances with $0$ choice points, which is notable. As the size of the instances grows,  $\combDMDGP$ without valid inequalities has better performance. This is most likely as a result of the larger instances having more stable sets and the subsequent models having a large number of constraints as a result.
\begin{figure}[h]
	\hspace*{-0.7cm} 
	\begin{subfigure}{0.5\textwidth}
		\includegraphics[scale=0.23]{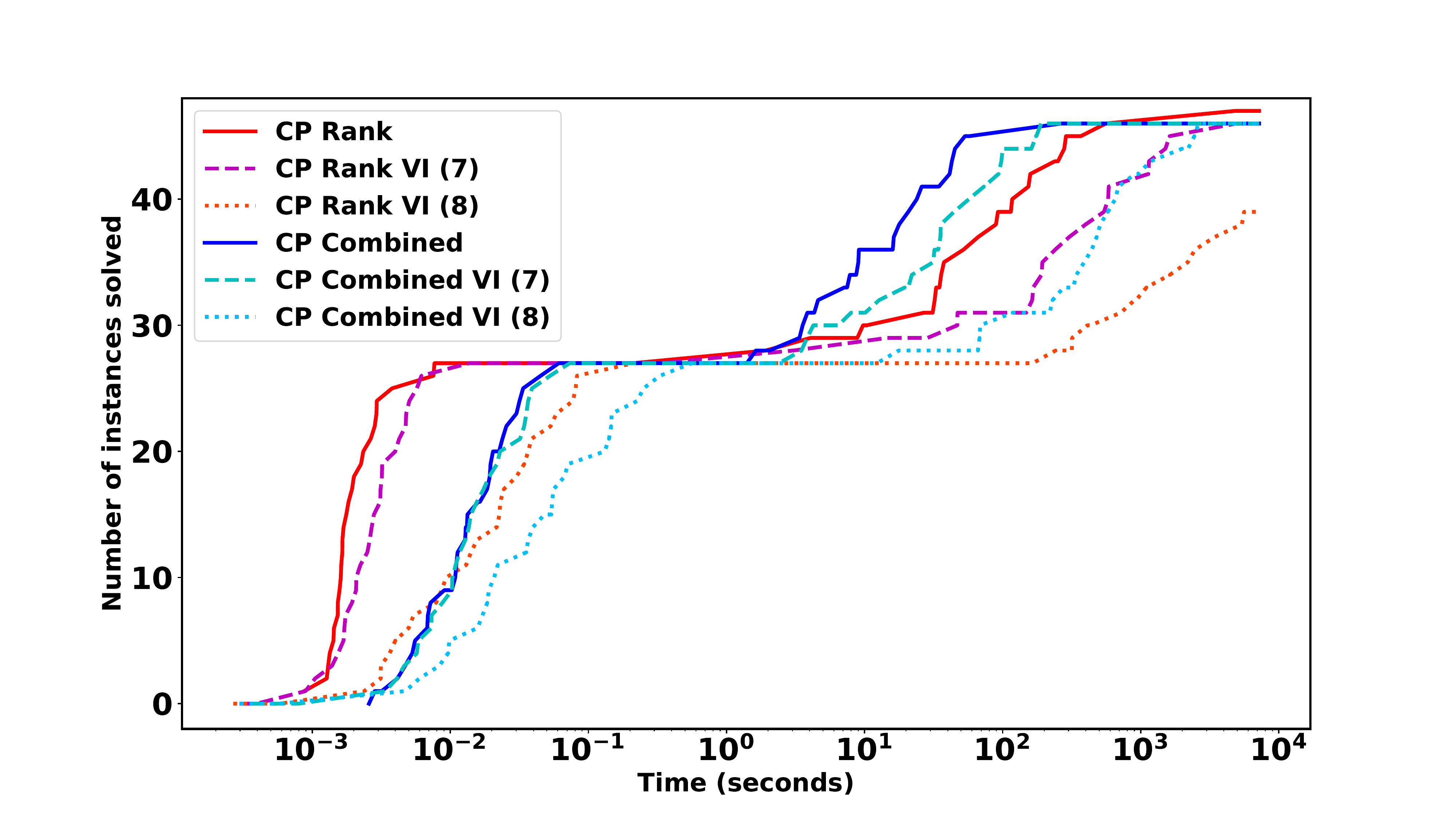}
		\caption{Small feasible instances}
		\label{fig:VIFeas}
	\end{subfigure}
	\hspace*{0.1cm} 
	\begin{subfigure}{0.45\textwidth}
		\includegraphics[scale=0.23]{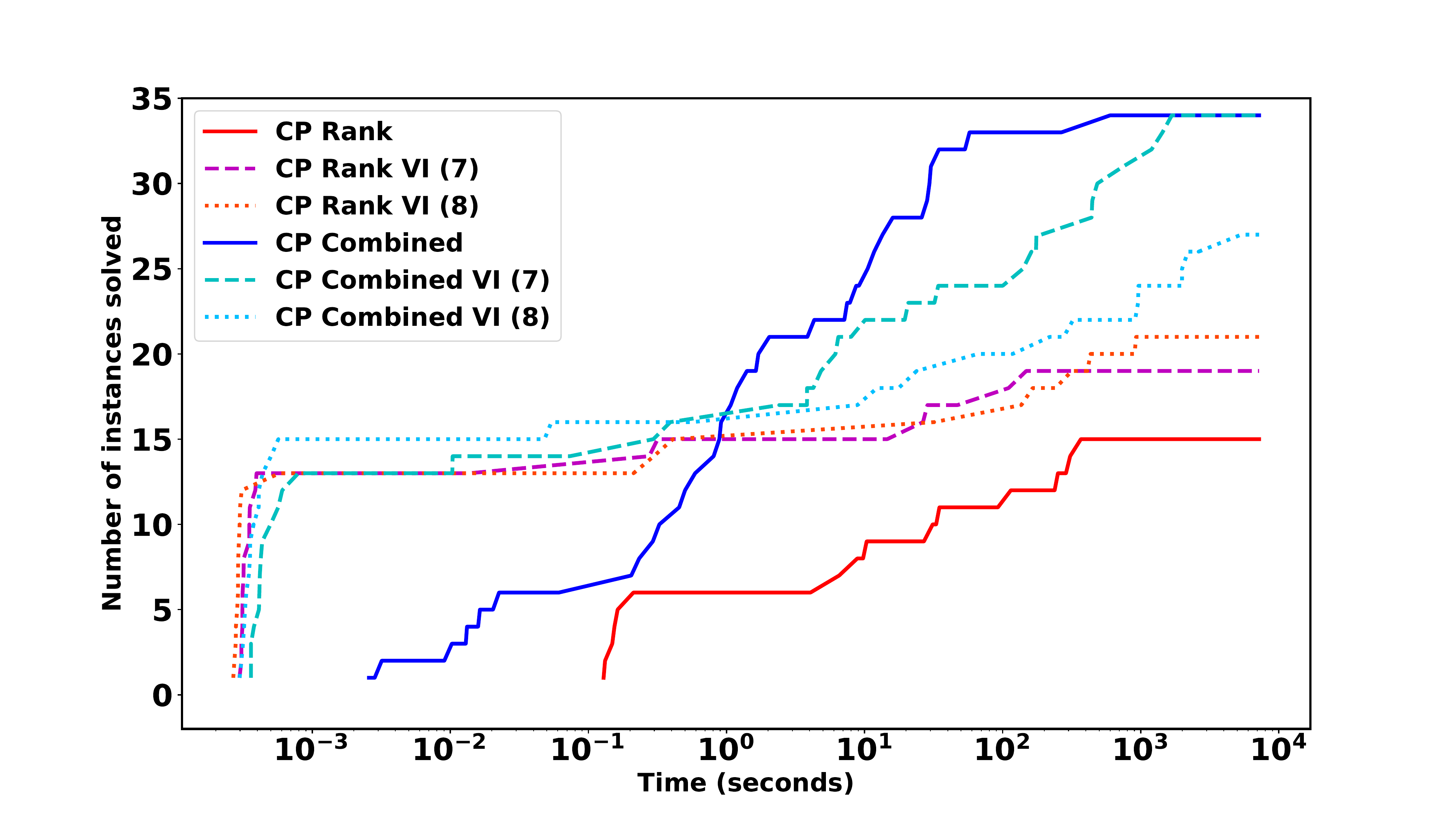}
		\caption{Small infeasible instances}
		\label{fig:VIInfeas}
	\end{subfigure}
	\caption{Solution times of the alternatives for valid inequalities {\crev (VI)}.}
	\label{fig:DMDGPVIResults}
\end{figure}

We conclude that in this preliminary implementation the valid inequalities are useful in small, low density instances and to improve $\rankDMDGP$ on infeasible instances. Future work on this topic includes further analysis to select a small enough subset of the stable sets that the number of constraints will not be too large and that will have a large impact on solution times. We remark however that the valid inequalities show promise given a more suitable implementation, such as a specialized propagator for the CP formulations.
\section{Conclusion} \label{sec:conclusion}

We propose the first CP formulations for the CTOP and compare them against two existing IP formulations in the literature. {\cb Applying ideas from both the CP and IP literature, and by studying the structure of DMDGP orders,} we introduce three classes of additions to formulations for CTOP, namely infeasibility checks, domain reduction, and symmetry breaking. We also provide the first class of valid inequalities for CTOP.

Our computational results show our models outperform the state-of-the-art IP formulations. They also indicate that the {\cb additions to the models based on structural findings} are useful for infeasible instances, but may negatively impact the amount of time it takes to solve feasible instances. {\crevOne Nonetheless, they give important insight into the structure of CTOP and show promise for improvement and extension.} 
\paragraph*{Acknowledgements}
{\crevOne We are grateful to Leo Liberti for introducing us to the area of distance geometry and this topic in particular. We would also like to thank J\'{e}r\'{e}my Omer for kindly providing the pseudo-protein instances similar to those used in \cite{Omer2017}. }

\bibliographystyle{abbrv-networks}
{\small \bibliography{bibliography}}

\newpage
\appendix
\section{Details of the Existing IP Models} 
\label{app:IPapp}
Prior to this work, Cassioli et al.\ \cite{cassioli2015} present two IP formulations for CTOP. 
\subsection{The vertex-rank IP}
\label{app:VRIP}
Let $\vrvar_{\vertind\rankind}$ be a binary variable, which takes value $1$ if a vertex $\vertind \in \vertexset$ is receives rank $\rankind \in [n-1]$ in the order, and $0$ otherwise. Since CTOP is a satisfiability problem, we are simply looking for a feasible order; there is no objective. The so-called \emph{vertex-rank IP} formulation is as follows:
\bsubeq
\label{form:IPvr}
\begin{alignat}{3}
\IPVR: \ & \sum_{\rankind \in  [n-1] } \vrvar_{\vertind\rankind} = 1  &  & \forall \  \vertind \in \vertexset \label{IPvr:vertex_unique} \\
&\sum_{\vertind \in \vertexset}  \vrvar_{\vertind\rankind} = 1 & & \forall \ \rankind \in  [n-1] \label{IPvr:rank_unique} \\
& \sum_{u \in \neighb(\vertind)} \sum_{j\in [r-1]} \vrvar_{uj} \geq \rankind\vrvar_{\vertind\rankind} & &\forall \  \vertind \in \vertexset, \rankind \in [1,\K-1] \label{IPvr:init_clique}\\
& \sum_{u \in \neighb(\vertind)} \sum_{j \in [\rankind-\K,\rankind-1]} \vrvar_{uj} \geq \K \vrvar_{\vertind\rankind} \quad & &  \forall \ \vertind \in \vertexset, \rankind \in [\K, n-1] \label{IPvr:dmdgp_pred} \\
& \vrvar_{\vertind \rankind} \in \{0,1\} && \forall \ \vertind \in \vertexset, \rankind \in [n-1] \label{IPvr:domain}
\end{alignat}
\esubeq

\noindent Constraints \eqref{IPvr:vertex_unique} and \eqref{IPvr:rank_unique} enforce a one-to-one assignment between the vertices and the ranks, so that each vertex appears exactly once in the order and that each rank gets exactly one vertex. Constraints \eqref{IPvr:init_clique} enforce that there must be an initial clique of size $\K$, i.e., that the vertices in positions $[\K-1]$ are all adjacent to their predecessors. Constraints \eqref{IPvr:dmdgp_pred} enforce each vertex with a rank in $[\K, n-1]$ has at least $\K $ contiguous predecessors. Finally, constraints \eqref{IPvr:domain} ensure the binary domain of the vertex-rank variables.

\medskip 
{\bf Proof of Proposition \ref{prop:LPrelax}.}
	 We claim that setting $\vrvar_{\vertind \rankind} = \frac{1}{n}$ for all $\vertind \in \vertexset, \rankind \in [n-1]$ always yields a feasible solution to the LP relaxation of $\IPVR$. We show the proposed solution satisfies all LP constraints. For constraints \eqref{IPvr:vertex_unique},
	fixing $\vertind \in \vertexset$ gives
	\[
	\displaystyle\sum_{r \in [n-1]} \frac{1}{n} = n  \cdot \frac{1}{n} = 1.
	\]
Similarly, for constraints \eqref{IPvr:rank_unique}, fixing $r \in [n-1]$ gives
	\[ \displaystyle \sum_{\vertind \in \vertexset} \frac{1}{n}  = |\vertexset| \cdot  \frac{1}{n}  = n\cdot  \frac{1}{n}  = 1. \]
	For constraints \eqref{IPvr:init_clique}, for any $\vertind \in \vertexset$ and $\rankind \in [1,\K-1]$ we have
	\[
	\sum_{u \in \neighb(\vertind)} \sum_{j\in [r-1]}  \frac{1}{n}\geq \sum_{j \in [r-1]}\frac{1}{n} = r  \frac{1}{n}
	\]
	where the inequality follows from $|\neighb(\vertind)| \geq 1$ since $\graph$ is connected. 
	Finally, for constraints \eqref{IPvr:dmdgp_pred}, for any $\vertind \in \vertexset$ and $\rankind \in [\K, n-1]$ we have
	\[
	 \sum_{u \in \neighb(\vertind)} \sum_{j \in [\rankind-\K,\rankind-1]} \frac{1}{n}  \geq  \sum_{j \in [\rankind-\K,\rankind-1]} \frac{1}{n} = \K \cdot \frac{1}{n} 
	\]
	again, due to $|\neighb(\vertind)| \geq 1$ since G is connected.
	
	Thus $\vrvar_{\vertind \rankind} = \frac{1}{n}$ satisfies all constraints and the LP relaxation is feasible. 
 \hfill $\Box$

%

\subsection{The clique digraph IP and its relaxation}
\label{app:CDIP}
As stated in Key Property \ref{prop:key}, a DMDGP order is a series of overlapping induced $(\K+1)$-cliques in $\graph$, which cover all the vertices and share $\K$ vertices between adjacent pairs. Define a clique digraph $\cliquedigraph = (\orderedcliques, \arcset)$, where $\orderedcliques$ is the index set of all \emph{ordered} cliques $\{\orderclique_j\}_{j \in \orderedcliques}$ of size $\K+1$  in the input graph $\graph$, where $\orderclique_j =(v_1^j, v_2^j, \hdots, v_{\K+1}^j)$, i.e., represented simply by its ordered vertices. There is an arc $(\orderclique_i,\orderclique_j) \in \arcset$ between $\orderclique_i,\orderclique_j \in \orderedcliques$ if $v_{\K+1}^i = v_{\K}^j$, i.e., if the two cliques overlap by $\K$ vertices and differ only by the first and the last vertex respectively. For instance, in the example given in Figure \ref{fig:DMDGPexample}, there will be an arc in $\arcset$ between the vertices corresponding to the ordered 3-cliques $(\vertind_4,\vertind_2,\vertind_3)$ and $(\vertind_2,\vertind_3,\vertind_1)$. Let $\ell_i$ be the last vertex of a clique $\orderclique_i \in \orderedcliques$. In this setting a DMDGP order is described by a path (of cliques) $P = (o_1, o_2, \hdots, o_{n-\K})$ in $\cliquedigraph$ where $ \{v \in \orderclique_{c_1}\} \cup \{\ell_{i} : i \in [2, n-\K] \} = \vertexset$. That is, the initial clique and the last vertices of all other cliques cover $\vertexset$. For instance, the DMDGP order given in Figure \ref{fig:DMDGPcliques} is described by the path of cliques $o_1 = (\vertind_4,\vertind_2,\vertind_3), o_2 = (\vertind_2,\vertind_3,\vertind_1), o_3 = (\vertind_3,\vertind_1,\vertind_5)$ and $o_4 = (\vertind_1,\vertind_5,\vertind_0)$. 

Define binary variables $\arcvar_{ij} = 1$ if the arc $(i,j) \in \arcset$ is selected in the path solution $P$, $0$ otherwise. Let the binary variable $\initcliquevar_j = 1$ if $j \in \orderedcliques$ is the first clique in $P$ and $\lastcliquevar_j= 1$ if $j \in \orderedcliques$ is the last clique in $P$. Define binary precedence variables $\predvar_{uv} = 1$ if $u \in \vertexset$ precedes $v \in \vertexset$ in the DMDGP order. Then the \emph{clique digraph IP} formulation is as follows: 
\bsubeq\label{form:cliqueDigraph}
\begin{alignat}{2}
\IPCD: \min \ & \sum_{(i,j) \in \arcset} \arcvar_{ij} \label{IPcd:obj} \\
\text{s.t.} \ & \sum_{j \in \orderedcliques} \initcliquevar_j=1 \label{IPcd:firstClique} \\
& \sum_{j \in \orderedcliques} \lastcliquevar_j =1 \label{IPcd:lastClique}  \\
& \initcliquevar_i + \sum_{\substack{j \in \orderedcliques \\ (j,i) \in \arcset}} \arcvar_{ji} = \lastcliquevar_i + \sum_{\substack{j \in \orderedcliques \\ (i,j) \in \arcset}} \arcvar_{ij} \quad &&  \forall\  i \in \orderedcliques \label{IPcd:flowBalance}\\
&\sum_{\substack{j \in \orderedcliques: \\ (i,j) \in \arcset}} \arcvar_{ij} \leq 1 &&  \forall\  i \in \orderedcliques \label{IPcd:successor}\\
& \sum_{\substack{j \in \orderedcliques: \\ v \in \orderclique_j}}\initcliquevar_j  +\sum_{\substack{(i,j) \in \arcset: \\ \orderclique_j \setminus \orderclique_i = \{v\}}} \arcvar_{ij} = 1 &&  \forall\  v \in \vertexset \label{IPcd:coverPath} \\
& \predvar_{uv} + \predvar_{vu} = 1 && \forall \ v, u \in \vertexset \text{ s.t. } v \neq u \label{IPcd:LOpairs}\\
&\predvar_{uv} + \predvar_{vw} + \predvar_{wu} \leq 2 && \forall \ v, u, w \in \vertexset \text{ s.t. } v \neq u \neq w\label{IPcd:LOtriplets}\\
& \predvar_{v_{k}^iv_{k+1}^i} \geq \arcvar_{ij}&& \forall \ (i,j) \in \arcset, k \in  [1,\K] \label{IPcd:cliqueOrder1} \\
&  \predvar_{v_{K+1}^iv} \geq \arcvar_{ij}&& \forall \ (i,j) \in \arcset,  v = \orderclique_j \setminus \orderclique_i
\label{IPcd:cliqueOrder2}\\
& \predvar_{v^i_k v^i_{k+1}} \geq \initcliquevar_i + \lastcliquevar_i  && \forall \ i \in \orderedcliques, k\in [1,\K] \label{IPcd:cliqueOrder3}\\
&\initcliquevar^i \in \{ 0,1\}&& \forall \ i \in \orderedcliques \label{IPcd:domInitClique} \\
&\lastcliquevar^i \in \{ 0,1\} && \forall \ i \in \orderedcliques \label{IPcd:domLastClique} \\
& \arcvar_{ij} \in \{ 0,1\} && \forall \ (i,j) \in \arcset \label{IPcd:domArc} \\
& \predvar_{uv} \in \{ 0,1\} && \forall \ u, v \in \vertexset \label{IPcd:domPred}
\end{alignat} 
\esubeq
Objective \eqref{IPcd:obj} imposes that we will select the minimum number of arcs required to form the path $P$. Constraints \eqref{IPcd:firstClique} and \eqref{IPcd:lastClique} ensure there is exactly one initial clique and one last clique selected. Constraints \eqref{IPcd:flowBalance} ensure that flow balance holds in the path $P$ except at the first and last path vertices which have one arc out and one arc in respectively. These flow balance constraints also ensure a correct predecessor relationship between the cliques in $P$. Constraints \eqref{IPcd:successor} ensure each clique has at most one successor, one if it is in the path and not the last clique and none otherwise. Constraints \eqref{IPcd:coverPath} ensure that the cliques selected cover all the vertices in $\vertexset$. Constraints \eqref{IPcd:LOpairs} and \eqref{IPcd:LOtriplets} impose a linear order among vertex pairs and triplets. Constraints \eqref{IPcd:cliqueOrder1}, \eqref{IPcd:cliqueOrder2}, and \eqref{IPcd:cliqueOrder3} \footnote{In \cite{cassioli2015}, the variables for predecessors are given as $w_{uv}$, only in these constraints. We believe this is a typo that the variables are in fact $\predvar_{uv}$.} ensure that each clique is ordered.
 Constraints \eqref{IPcd:cliqueOrder1} impose that $v^i_k$ precedes vertex $v^i_{k+1}$ if ordered clique $i$ has an outgoing arc in the path solution $P$. Constraints \eqref{IPcd:cliqueOrder2} ensure that if arc $(i,j) \in \arcset$ is selected in $P$, the vertex of $j$ not in $i$, $v$, is preceded by all other vertices of $j$ in $i$, which have been ordered by \eqref{IPcd:cliqueOrder1}. Constraints \eqref{IPcd:cliqueOrder3} are similar to \eqref{IPcd:cliqueOrder1}, except they order the vertices of the first and last clique. Finally, constraints \eqref{IPcd:domInitClique}, \eqref{IPcd:domLastClique}, \eqref{IPcd:domArc}, and \eqref{IPcd:domPred} enforce the binary domains of all variables.

$\IPCD$ is disadvantaged by the potential number of vertices in the clique digraph $D$, as the cardinality of $\orderedcliques$ can be quite large even for relatively sparse graphs. To reduce the number of variables in $\IPCD$, Cassioli et al.\ \cite{cassioli2015} present a relaxation of the clique digraph formulation which considers unordered cliques. The idea is to relax the ordering constraints in the formulation and to solve this relaxation as a first check for the existence of a DMDGP order. In this case, the worst case number of vertices in $\cliquedigraph$ can be reduced by a factor of $(\K+1)!$. Let $\orderedcliques$ now denote the set of \emph{unordered} cliques of size $\K+1$ in $\graph$. Let binary variable $\unorderedindvar_j= 1$ if the unordered clique $j \in \orderedcliques$ is used in $P$. The unordered clique \emph{IP relaxation} of $\IPCD$ is as follows:
\bsubeq
\label{form:unorderedRelax}
\begin{alignat}{2}
\Unorderedrelax: \min \ & \sum_{(i,j) \in \arcset} \arcvar_{ij} \label{IPrelax:obj} \\
\text{s.t.} \ & \eqref{IPcd:firstClique} - \eqref{IPcd:LOtriplets} \label{IPrelax:existing}\\
& \predvar_{uv} \geq \arcvar_{ij} && \forall \ (i,j) \in \arcset, u \in \orderclique_i, v = \orderclique_j \setminus \orderclique_i 
\label{IPrelax:cliqueUnorder}\\
& \unorderedindvar_j \geq \arcvar_{ij} && \forall \ (i,j) \in \arcset \label{IPrelax:ind1} \\
& \unorderedindvar_i \geq \initcliquevar_i && \forall \ i \in \orderedcliques \label{IPrelax:ind2} \\
& \sum_{\substack{i \in \orderedcliques : \\ v \in \orderclique_i}} \unorderedindvar_i \leq \K + 1 \quad && \forall \  v \in \vertexset  \label{IPrelax:relax} \\
& \eqref{IPcd:domInitClique} - \eqref{IPcd:domArc}
 \label{IPrelax:dom1} \\
&z_i \in \{0,1\} && \forall \ i \in \orderedcliques  \label{IPrelax:dom2}
\end{alignat}
\esubeq
\eqref{IPrelax:obj} and\eqref{IPrelax:existing} are the same as in $\IPCD$.  However, we have relaxed the clique ordering constraints, \eqref{IPcd:cliqueOrder1}-\eqref{IPcd:cliqueOrder3}, so now  constraints \eqref{IPrelax:cliqueUnorder} ensure we have that if arc $(i,j) \in \arcset$ is selected in $P$, all $u \in \orderclique_i$ precede vertex $v$, the only vertex of $j$ not in $i$. Constraints \eqref{IPrelax:ind1} ensure we have correctly linked the arc variables and clique variables to the indicator $\unorderedindvar_j$, so that it is $1$ if a clique is part of an arc selected in the solution path $P$, while the indicator for the first clique is activated through constraints \eqref{IPrelax:ind2}. The constraints \eqref{IPrelax:relax} impose that each vertex appears in at most $\K+1$ cliques. These constraints are another relaxation, since to make it exact we would need to enforce that all vertices except the first and last $\K$ appear in $\K+1$ cliques; however this will require  many more variables to express. Finally constraints \eqref{IPrelax:dom1} and \eqref{IPrelax:dom2} enforce the variable domains.

When a solution to $\Unorderedrelax$ is found, it must be verified as this solution does not necessarily yield a DMDGP order. The verification is a simple check to ensure the $\predvar$ solution forms a DMDGP order. The strength in this formulation is that if $\Unorderedrelax$ is infeasible, there is no DMDGP order for the instance.
\section{Selected Proofs}
\label{Proofapp}

\subsection{Proof of Proposition \ref{edge_min}} 
\label{proof_edgemin}
\begin{proof}
	Minimally, the vertex in position $0$ must be adjacent to $\K$ vertices since it is in a $(\K+1)$-clique, similarly for the vertex in position $n-1$. The vertex in position $1$ is in two $(\K+1)$-cliques since it is in the initial clique but also is an adjacent predecessor of the vertex in position $\K+1$; similarly for the vertex in position $n-2$. We extend this logic to all vertices in the order, noting that the centre $|\vertexset| - 2\K$ vertices in the order, i.e., the ones in positions $[\K, n-\K-1]$, must all be in $2\K$ cliques, as seen in Figure \ref{fig:mindegs}. 
	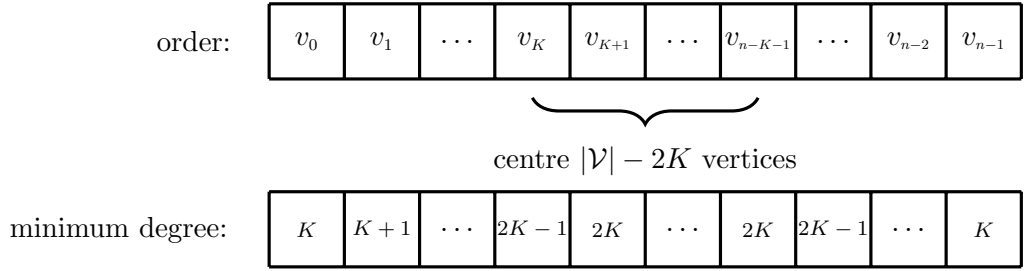
\begin{figure}[h]	
		\begin{tikzpicture}[main_node/.style={circle,fill=white!80,draw,inner sep=0pt, minimum size=18pt},
		line width=1.2pt]
		\draw (0,0) grid (10,1);
		\node[black] at (-1,0.5) {order: };
		\node[black] (v0) at (0.5,0.5) {$v_{\scaleto{0}{4pt}}$};
		\node[black] (v1) at (1.5,0.5) {$v_{\scaleto{1}{4pt}}$};
		\node[black] (dots1) at (2.5,0.5) {$\hdots$};
		\node[black] (vK) at (3.5,0.5) {$v_{\scaleto{\K}{4pt}}$};
		\node[black] (vK1) at (4.5,0.5) {$v_{\scaleto{\K+1}{4pt}}$};
		\node[black] (dots2) at (5.5,0.5) {$\hdots$};
		\node[black] (vnK1) at (6.5,0.5) {$v_{\scaleto{n-\K-1}{4pt}}$};
		\node[black] (dots3) at (7.5,0.5) {$\hdots$};
		\node[black] (vn2) at (8.5,0.5) {$v_{\scaleto{n-2}{4pt}}$};
		\node[black] (vn1) at (9.5,0.5) {$v_{\scaleto{n-1}{4pt}}$};
		\draw [decorate,decoration={brace,amplitude=10pt},xshift=10cm, rotate=90] (-0.25,3.5) -- (-0.25,6.5) node [black,below, midway, yshift=-0.5cm] {centre $|\vertexset| - 2\K$ vertices};
		\node[black] at (-2 ,-2) {minimum degree: };
		\draw[yshift=-0.5cm] (0,-2) grid (10,-1);
		\node[black] at (0.5,-2) {\scriptsize$\K$};
		\node[black] at (1.5,-2) {\scriptsize$\K+1$};
		\node[black]  at (2.5,-2) {$\hdots$};
		\node[black] at (3.5,-2) {\scriptsize$2\K-1$};
		\node[black] at (4.5,-2) {\scriptsize$2\K$};
		\node[black] at (5.5,-2) {$\hdots$};
		\node[black]  at (6.5,-2) {\scriptsize$2\K$};
		\node[black] at (7.5,-2) {\scriptsize$2\K-1$};
		\node[black]  at (8.5,-2) {$\hdots$};
		\node[black]  at (9.5,-2) {\scriptsize$\K$};
		\end{tikzpicture}
		\caption{Minimum degree requirement for ranks in a DMDGP order. }\label{fig:mindegs}
	\end{figure}
	This  minimal case analysis gives a lower bound on the number of edges in a DMDGP order. For each vertex from position $0$ to position $n-1$ we sum over the minimum degree in $\graph$ to have a DMDGP order. Note that we divide by two to avoid double counting since the input graph is undirected.
	\bsubeq 
	\begin{alignat*}{2}
	&\frac{1}{2} (\K + (\K +1) + (\K+2) + \cdots +2\K +\cdots +2\K +\cdots +(\K+2) +  (\K +1) + \K) \\
	& = \frac{ (|\vertexset|-2\K)2\K + 2\sum_{i = \K}^{2\K-1} i }{2}\\
	&=  \K(|\vertexset|-2\K) +\sum_{i = \K}^{2\K-1} i \\
	&= \K(|\vertexset|-2\K) + \frac{1}{2}\K(3\K-1) \\
	&= \left (|\vertexset| - \frac{1}{2} \right )\K - \frac{1}{2} \K^2
	\end{alignat*}
	\esubeq
	Thus the minimum number of edges for a DMDGP order is $\left (|\vertexset| - \frac{1}{2} \right )\K - \frac{1}{2} \K^2$.
\end{proof}

\subsection{Proof of Proposition \ref{prop_incompare}} 
\label{proof_incompare}
\begin{proof}
	We first show that there exists an instance for which  \eqref{scut1} dominates \eqref{scut2}. Consider the graph, $G_1$, shown in Figure \ref{fig:S1} and let $K= 3$. Let $\set_1 = \{v_1, v_2,v_3,v_4, v_5\}$. Then $G_1[\set_1] = G_1$ does not have a DMDGP order due to Infeasibility Check \ref{inf:mindeg}, e.g., since $v_3$ cannot be in the initial clique and cannot have $K$ contiguous predecessors. 
	
	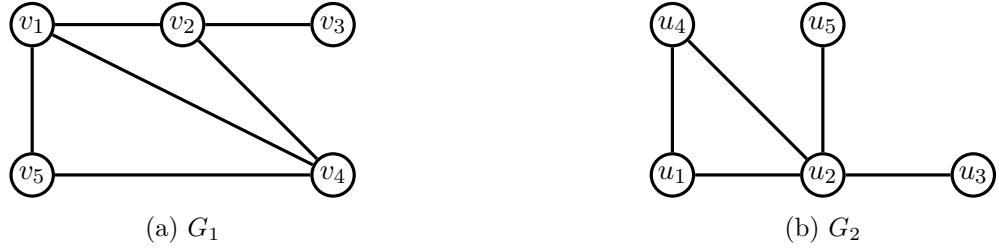
\begin{figure}[h]
		\begin{subfigure}{0.6\textwidth}
			\centering
			\begin{tikzpicture}[vertex/.style={circle,fill=white!80,draw,inner sep=0pt, minimum size=16pt, scale = 1},
			line width=1.2pt, scale = 1]
			\node[vertex] (a) at (0,2) {$v_1$};
			\node[vertex]  (b) at (2,2) {$v_2$};
			\node[vertex]  (c) at (4,2)  {$v_3$};
			\node[vertex]  (d) at (4,0) {$v_4$};
			\node[vertex]  (e) at (0,0) {$v_5$};
			\draw[-] (a) -- (b)-- (c) (b)-- (d) --(a) -- (e) --(d);
			\end{tikzpicture}
			\caption{$G_1$} 
			\label{fig:S1}
		\end{subfigure}
		~	\begin{subfigure}{0.4\textwidth}
			\centering
			\begin{tikzpicture}[vertex/.style={circle,fill=white!80,draw,inner sep=0pt, minimum size=16pt, scale = 1},
			line width=1.2pt, scale = 1]
			\node[vertex] (a) at (0,2) {$u_1$};
			\node[vertex]  (b) at (2,2) {$u_2$};
			\node[vertex]  (c) at (4,2)  {$u_3$};
			\node[vertex]  (d) at (0,4) {$u_4$};
			\node[vertex]  (e) at (2,4) {$u_5$};
			\draw (a) edge[-] (b) (b) edge[-] (c)
			(b) edge[-] (d) (d) edge[-] (a) (e) edge[-] (b);
			\end{tikzpicture}
			\caption{$G_2$} 
			\label{fig:S2}
		\end{subfigure}
		\caption{Graphs used to compare valid inequalities \eqref{scut1} and \eqref{scut2}.}
		\label{fig:Incomparables}
	\end{figure}

	To find the inequality \eqref{scut1}, we calculate $\max_{v \in \set_1} \deltamiss_{\set_1}(v) = \deltamiss_{\set_1}(v_3) = 3$ and find:
	\bsubeq
	\begin{alignat}{2}
	r_{max} - r_{min}&  \geq \max \{ |\set_1|,  \deltamiss_{\set_1} + \K\} \\
	r_{max} - r_{min} & \geq \max \{5,  3 + 3\} \\
	r_{max} - r_{min} & \geq 6
	\end{alignat}
	\esubeq
	The instance only has $5$ vertices in total, so this single inequality is enough to prove the instance does not have a DMDGP order.
	The largest stable set in the graph in Figure \ref{fig:S1} has cardinality two; using any of these stable sets as $\stableset_1$ makes the inequality \eqref{scut2}
	\bsubeq
	\begin{alignat}{2}
	r_{max} - r_{min}&  \geq (|\stableset_1|-1)(\K+1) \\
	r_{max} - r_{min} & \geq (2-1)(3+1) \\
	r_{max} - r_{min} & \geq 4
	\end{alignat}
	\esubeq
	which is not sufficient to prove the infeasibility of the instance. Thus \eqref{scut1} dominates \eqref{scut2} in this instance.\\
	We now show that there exists an instance for which  \eqref{scut2} dominates \eqref{scut1}. Consider the graph, $G_2$, shown in Figure \ref{fig:S2} and let $K= 3$. 
	A maximum stable set in $G_2$ is $\stableset_2 =\{u_1, u_3, u_5\}$, making the inequality \eqref{scut2}:
	\bsubeq
	\begin{alignat}{2}
	r_{max} - r_{min}&  \geq (|\stableset_2|-1)(\K+1) \\
	r_{max} - r_{min} & \geq (3-1)(3+1) \\
	r_{max} - r_{min} & \geq 8
	\end{alignat}
	\esubeq
	Setting $\set_2 = \stableset_2$, then $G[\set_2]$ does not have a DMDGP order due to Infeasibility Check \ref{inf:mindeg}.
	We calculate $\max_{u \in \set_2} \deltamiss_{\set_2}(u) = 2$ and find
	\bsubeq
	\begin{alignat}{2}
	r_{max} - r_{min}&  \geq \max \{ |\set_2|,  \deltamiss_{\set_2} + \K\} \\
	r_{max} - r_{min} & \geq \max \{ 3,  2+ 3\} \\
	r_{max} - r_{min} & \geq 5
	\end{alignat}
	\esubeq
	Thus \eqref{scut2} dominates \eqref{scut1}, and we may conclude that \eqref{scut1} and \eqref{scut2} are incomparable.

\end{proof}

\subsection{Proof of Proposition \ref{wheel}} 
\label{proof_wheel}

\begin{proof} Let the vertices in the wheel be indexed as in Figure \ref{fig:W7}, i.e., the centre vertex has index 0, while the ones in the peripheral cycle are indexed from $[1,n]$ counter clock-wise starting at an arbitrary one.
	\BI
	\I \textit{Case 1:} When $n$ is odd, a maximum stable set in $W_n$ is $\{ 1, 3, \hdots, n-2 \} $ with size $\frac{n-1}{2}$. For the right-hand side of the inequality in Infeasibility Check \ref{inf:maxSS} to hold, we need 
	\[\frac{n-1}{2} \geq\frac{n}{\K+1} + 1 \]
	\[\frac{n}{2} - \frac{n}{\K+1} \geq \frac{1}{2}\]
	\[n \left(\frac{1}{2} - \frac{1}{\K+1}\right) \geq \frac{1}{2}\]
	\[n  \geq  \frac{\K+1}{\K-1}. \]
	\I \textit{Case 2:} Similarly, when $n$ is even, a maximum stable set in $W_n$ is $\{ 1, 3, \hdots, n-3 \} $ with size $\frac{n}{2}$. So we need
	\[\frac{n}{2} \geq\frac{n}{\K+1} + 1 \]
	\[n \left(\frac{1}{2} - \frac{1}{\K+1}\right) \geq 1\]
	\[n  \geq 2 \frac{\K+1}{\K-1}. \]
	\EI
	Thus the inequalities hold.
\end{proof}

\section{Example}
\label{Exapp}

\begin{ex}
	We will demonstrate the strength of symmetry breaking on the graph in Figure \ref{fig:DMDGPsymm} with $\K =2$. 
	\begin{figure}[h]
		\centering
		\begin{tikzpicture}[main_node/.style={circle,fill=white!80,draw,inner sep=0pt, minimum size=16pt, scale=1},
		line width=1.2pt, scale=1]
		\node[main_node] (v0) at (2,3) {$v_0$};
		\node[main_node] (v1) at (3.5,1.6) {$v_1$};
		\node[main_node] (v2) at (2,0) {$v_2$};
		\node[main_node] (v3) at (0,0) {$v_3$};
		\node[main_node] (v4) at (-1.3,1.6) {$v_4$};
		\node[main_node] (v5) at (0, 3) {$v_5$};
		\draw[-] (v0) -- (v1) -- (v2) -- (v3) -- (v4) -- (v2) -- (v0) -- (v5) -- (v3) -- (v1) -- (v5) -- (v2);
		\end{tikzpicture}
		\caption{ A graph instance which is feasible for CTOP with $\K =2$.} \label{fig:DMDGPsymm}
	\end{figure}
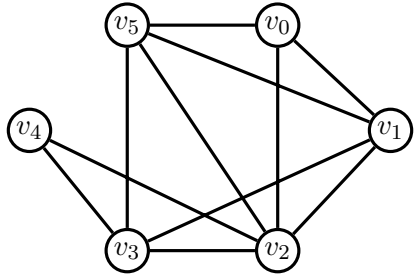
	
	This instance has $12$ feasible DMDGP orders:
	\begin{alignat*}{3}
	\Lorder v_4, v_3, v_2, v_1, v_0, v_5 \Rorder &\quad& \Lorder v_5, v_0, v_1, v_2, v_3, v_4 \Rorder &\quad& \Lorder v_4, v_3, v_2, v_5, v_0, v_1 \Rorder \\
	\Lorder v_1, v_0, v_5, v_2, v_3, v_4 \Rorder &\quad& \Lorder v_4, v_2, v_3, v_1, v_5, v_0 \Rorder &\quad& \Lorder v_0, v_5, v_1, v_3, v_2, v_4 \Rorder\\
	\Lorder v_4, v_3, v_2, v_1, v_5, v_0 \Rorder &\quad& \Lorder v_0, v_5, v_1, v_2, v_3, v_4 \Rorder &\quad& \Lorder v_4, v_2, v_3, v_5, v_1, v_0 \Rorder\\
	\Lorder v_0, v_1, v_5, v_3, v_2, v_4 \Rorder &\quad& \Lorder v_4, v_3, v_2, v_5, v_1, v_0  \Rorder &\quad& \Lorder v_0, v_1, v_5, v_2, v_3, v_4 \Rorder
	\end{alignat*}
	We begin by noticing that $\vertexset^{\degr[2,2]} =  \{v_4\}$	so we fix $r_{v_4} = 0$ and eliminate half the orders, leaving the orders:
	\begin{alignat*}{3}
	\Lorder v_4, v_3, v_2, v_1, v_0, v_5 \Rorder &\quad& \Lorder v_4, v_2, v_3, v_1, v_5, v_0 \Rorder  &\quad& \Lorder v_4, v_3, v_2, v_5, v_0, v_1 \Rorder \\
	\Lorder v_4, v_3, v_2, v_1, v_5, v_0 \Rorder &\quad& \Lorder v_4, v_3, v_2, v_5, v_1, v_0  \Rorder  &\quad& \Lorder v_4, v_2, v_3, v_5, v_1, v_0 \Rorder
	\end{alignat*}
	There are no stable sets meeting Symmetry Breaking Condition \ref{sym:ss}, and the only clique meeting Symmetry Breaking Condition \ref{sym:clique} is $\{v_1,v_5\}$. Thus we enforce $r_{v_1} < r_{v_5}$ and eliminate another three orders, leaving three remaining orders:
	\begin{alignat*}{3}
	\Lorder v_4, v_3, v_2, v_1, v_0, v_5 \Rorder &\quad&  \Lorder v_4, v_3, v_2, v_1, v_5, v_0 \Rorder  &\quad& \Lorder v_4, v_3, v_2, v_5, v_0, v_1 \Rorder.
	\end{alignat*}
	Since we have found a clique in Symmetry Breaking Condition \ref{sym:clique}, we first examine Symmetry Breaking Condition \ref{sym:ex_clique}. Beginning with $\clique = \{v_1,v_5\}$ and $v = v_2$, we have 
	\[
	(\neighb(v_2) \cup \{v_2\} )\setminus (\neighb(\{v_1,v_5\}) \cup \{v_1,v_5\})  = \{v_0,v_1,v_2,v_3,v_4,v_5\} \setminus \{v_0,v_1,v_2,v_3,v_5\} =\{v_4\}
	\]
	so $w= v_4$ and we can add the following logical constraints:
	\[ |\rankvar_{v_2} - \rankvar_{v_4}| \geq 3 \implies \rankvar_{v_2} < \rankvar_{v_5}\]
	\[ |\rankvar_{v_2} - \rankvar_{v_4}| \geq 3 \implies \rankvar_{v_2} < \rankvar_{v_1}\]
	In fact, the latter suffices since we have already added $ \rankvar_{v_1} < \rankvar_{v_5}$. Unfortunately this does not remove any solutions from the pool. We now try $\clique=\{v_0\}$ and $v=v_5$. In this case we have $w = v_3$ and add
	\[ |\rankvar_{v_5} - \rankvar_{v_3}| \geq 3 \implies \rankvar_{v_5} < \rankvar_{v_0}\]
	which removes a further order, yielding the remaining orders:
	\begin{alignat*}{3}
	\Lorder v_4, v_3, v_2, v_1, v_5, v_0 \Rorder  &\quad& \Lorder v_4, v_3, v_2, v_5, v_0, v_1 \Rorder .
	\end{alignat*}
	Finally, we extend $\clique=\{v_3\}$ using $v=2$, giving $w=v_0$ and the logical constraint:
	\[ |\rankvar_{v_2} - \rankvar_{v_0}| \geq 3 \implies \rankvar_{v_2} < \rankvar_{v_3}\]
	Thus symmetry breaking has reduced the solution space to a single DMDGP order:
	\begin{alignat*}{3}
	\Lorder v_4, v_3, v_2, v_5, v_0, v_1 \Rorder 
	\end{alignat*}
\end{ex}

\section{Summary of the paper} 
\label{app:summary}
Table \ref{table:DMDGPsummary} provides a summary of the models from the literature as well as all of our proposed formulations and structural insights.

\afterpage{
\begin{landscape}
	\centering
	\renewcommand\arraystretch{0.65}
	\small
\setlength{\LTleft}{-16pt}	
\begin{longtable}{lllllll}
	\caption{Summary of CTOP formulations and structural findings.}
	\label{table:DMDGPsummary}\\
\toprule
\multicolumn{7}{l}{\textbf{DMDGP:} Given a graph $\graph = (\vertexset, \edgeset)$ and integer $\K> 0$, minimally, a DMDGP order is a series of $(\K + 1)$-cliques which overlap by at least $\K$ vertices.} \\
\midrule
\cmidrule(l{0em}r{1em}){1-7}
{\cellcolor{gray!50!white}\textbf{FORMULATIONS}}  & Variables & Domain & Number & Constraints & Number & Comments \\
{\cellcolor{gray!20!white}\textsc{Literature}}   \\
Vertex-rank $\IPVR$ & $\vrvar_{\vertind \rankind}$ & $\{0,1\}$ & $n^2$ & 1-1 assignment & $2n$ \\
 &  &  &  & clique  & $n^2$ \\
Clique Digraph $\IPCD$ & $\arcvar_{\orderclique_i \orderclique_j}$ & $\{0,1\}$ & $|\orderedcliques|^2$ & select initial and last clique & 2 & Enumerate ordered  \\
 & $\initcliquevar_{\orderclique_j}$ & $\{0,1\}$ & $|\orderedcliques|$ & flow balance & $|\orderedcliques|$&$(\K+1)$-cliques \\
 & $\lastcliquevar_{\orderclique_j}$ & $\{0,1\}$ & $|\orderedcliques|$ & successor & $|\orderedcliques|$ \\
 & $\predvar_{uv}$ & $\{0,1\}$ & $|\vertexset|^2 $& vertex covering & $|\vertexset|$ \\
 &  &  &  & precedence & $|\vertexset|^2+ |\vertexset|^3$ \\
 &  &  &  & clique ordering & $(2\K+1)\times|\orderedcliques|^2$ \\
Clique Digraph Relax.\ $\Unorderedrelax$ & $\arcvar_{\orderclique_i \orderclique_j}$ & $\{0,1\}$ & $|\orderedcliques|^2$ & select initial and last clique & 2 & Enumerate $(\K+1)$-cliques \\
 & $\initcliquevar_{\orderclique_j}$ & $\{0,1\}$ & $|\orderedcliques|$ & flow balance & $|\orderedcliques|$ &If infeasible, \\
 & $\lastcliquevar_{\orderclique_j}$ & $\{0,1\}$ & $|\orderedcliques|$ & successor & $|\orderedcliques|$ &no DMDGP order \\
 & $\predvar_{uv}$ & $\{0,1\}$ & $n^2$ & vertex covering & $|\vertexset|$ &o.w., may or may not have  \\
 &  &  &  & precedence & $|\vertexset|^2+ |\vertexset|^3$&DMDGP order \\
 &  &  &  & relaxed clique ordering & $2|\orderedcliques|^2 +|\orderedcliques|$ \\
 &  &  &  & no. times vertex in clique & $|\vertexset|$ \\
{\cellcolor{gray!20!white}\textsc{New}} \\
CP Rank $\rankDMDGP$ & $\rankvar_\vertind$ & $[n-1]$ & $n$ & AllDifferent & 1 \\
 &  &  &  & clique & $|\vertexset|^2$ \\
CP Vertex $\vertexDMDGP$ & $\vertvar_\rankind$ & $[|\vertexset|-1]$ & $n$ & AllDifferent & 1 \\
 &  &  &  & clique & $|\vertexset|^2$ \\
CP Combined $\combDMDGP$ & $\rankvar_\vertind$ & $[n-1]$ & $n$ & AllDifferent and inverse& 3 & Combines CP Rank and  \\
 & $\vertvar_\rankind$ & $[|\vertexset|-1]$ & $n$ & clique & $2|\vertexset|^2$& CP Vertex\\
 \midrule
{\cellcolor{gray!50!white}\textbf{STRUCTURAL FINDINGS}} &&&&  {\cellcolor{gray!20!white}\textsc{Symmetry Breaking}}  \\
{\cellcolor{gray!20!white} \textsc{Infeasibility Checks}} &&&& Arbitrary &  \multicolumn{2}{c}{ $r_{v_1} < r_{v_2}$ }\\
Minimum Degree & \multicolumn{3}{c}{$\degr(\vertind) < \K$} & Degree $\K$ & \multicolumn{2}{c}{ $r_{v_i} =0$ and $r_{v_j} =n-1$} \\
Minimum Edges & \multicolumn{3}{c}{ $|\edgeset| < \left (|\vertexset| - \frac{1}{2} \right )\K - \frac{1}{2} \K^2$ } & 	Stable Set Same Neighbours &\multicolumn{2}{c}{ $\rankvar_{\vertind_1} < \rankvar_{\vertind_2} < \cdots < \rankvar_{\vertind_k}$} \\
UB on Small Deg.\ Vertices &\multicolumn{3}{c}{ $\left |\vertexset^{\degr[\K, \K+\delta]} \right | > 2(\delta +1) +1$ } & Clique Same Neighbours &\multicolumn{2}{c}{ $\rankvar_{\vertind_1} < \rankvar_{\vertind_2} < \cdots < \rankvar_{\vertind_k}$ }\\
LB on Large Deg.\ Vertices &\multicolumn{3}{c}{ $\left |\vertexset^{\degr[2\K, n-1]} \right | \leq n-(2\K+1)$} & Extended Stable Set &\multicolumn{2}{c}{ $|\rankvar_\vertind - \rankvar_w| \geq K+1 \implies \rankvar_\vertind < \rankvar_u $} \\
Max Stable Set &\multicolumn{3}{c}{ $\stableset > \frac{n}{\K+1} + 1 $} &Extended Clique & \multicolumn{2}{c}{ $|\rankvar_\vertind - \rankvar_w| \geq K+1 \implies \rankvar_\vertind < \rankvar_u $}\\
{\cellcolor{gray!20!white} \textsc{Domain Reduction} }&&&& {\cellcolor{gray!20!white} \textsc{Valid Inequalities}} \\
Small Deg.\ Vertices & \multicolumn{3}{c}{$[\degr(\vertind)-\K] \cup [n-1-(\degr(\vertind)-\K), n-1]$} & Vertex Subset &\multicolumn{2}{c}{ $r_{max} - r_{min} \geq |\set|$ }  \\
Neighb.\ Small Deg.\ Vertices & \multicolumn{3}{c}{ $[ \degr(\vertind^*)] \cup [n-1-\degr(\vertind^*), n-1] $ } & Improved Vertex Subset & \multicolumn{2}{c}{$r_{max} - r_{min}  \geq \max \{ |\set|, \deltamiss_\set+K\}$ }\\
&&&& Stable Set & \multicolumn{2}{c}{$r_{max} - r_{min}   \geq (|\stableset| -1)(\K+1) $}\\
\bottomrule
\end{longtable}
\end{landscape}
\clearpage 
\restoregeometry} 
\newpage
\section{DMDGP Results} 
\label{app:DMDGPApp}
 
 We provide the following tables:
 \BI
 \I Table \ref{table:smallExact} compares the integer programming formulations from the literature \cite{cassioli2015}, namely the vertex-rank formulation $\IPVR$ and the clique digraph formulation $\IPCD$, with the newly proposed constraint programming formulations, namely the rank-based primal formulation $\rankDMDGP$, the vertex-based dual formulation $\vertexDMDGP$ and the combined formulation $\combDMDGP$ on the small instances for a variety of densities {\crev with $\K =3$}.
 \I Table \ref{table:largeCP} compares the CP formulations {\crev $\rankDMDGP$,} $\vertexDMDGP$, and $\combDMDGP$ on the medium-sized instances for low to medium densities {\crev with $\K =3$}.
 \I Table \ref{table:CDrelax} compares the IP Clique Digraph formulations $\IPCD$ and $\Unorderedrelax$ on small instances {\crev with $\K =3$}.
 \I {\crev Tables \ref{table:prot30},  \ref{table:prot40},  \ref{table:prot50}, and  \ref{table:prot60}  compare the $\IPVR$, $\vertexDMDGP$, and $\combDMDGP$ formulations with $\K=3$ on pseudo-protein instances with $30,40,50,$ and $60$  vertices respectively.}
 
 \I Table \ref{table:enhance} compares the $\combDMDGP$ formulation {\crev with $\K =3$}, with and without the additions from the structural analysis, on small instances. It also gives the rule that was applied and to how many vertices or sets using the following conventions:
 	\BI
 	\I $[\text{Inf} r]$
refers to Infeasibility Check $r$, either 1 or 3.
 	\I $[\text{DR} r]$ refers to Domain Reduction Rule 1 which has been applied to $r$ vertices.
 	\I $[\text{E}\stableset s]$ refers to Symmetry Breaking Condition 4 which has been applied to $s$ stable sets.
 	 	\I $[\text{E}\clique c]$ refers to Symmetry Breaking Condition 5 which has been applied to $c$ cliques
 	\I $[\text{Arb}]$ refers to an arbitrary ordering on two vertices, as in Symmetry Breaking Condition 6.
 \EI 
 We note that Infeasibility Checks 2 and 4, Domain Reduction Rule 2, and Symmetry Breaking Conditions 1, 2, 3 are excluded from the table since they were never applied.	
 \I{ \crev Table \ref{table:VI} compares $\rankDMDGP$ and $\combDMDGP$ with valid inequalities in the form (7) and (8) and without valid inequalities. Valid inequalities are generated for every stable set in each instance and added before solving.}
 \I{ \crev Table \ref{table:K4} compares the vertex-rank formulation $\IPVR$ with CP formulations $\rankDMDGP$,  $\vertexDMDGP$, and $\combDMDGP$ on the small instance with $\K=4$.} 
 \I{ \crev Table \ref{table:K5} compares the vertex-rank formulation $\IPVR$ with CP formulations $\rankDMDGP$,  $\vertexDMDGP$, and $\combDMDGP$ on the small instance with $\K=5$.} 
 \EI
For the instances, we have
\BI
\I ``Status": Feasibility status of the instance; ``Feas." and ``Infeas." if it is proven to be feasible and infeasible by any of the methods, respectively, ``Unsol." otherwise as it is not solved by any method.
\EI 
{\crev In particular, for random instances we have
\BI
\I ``$n$": The number of vertices in the input graph.
\BI
\I $n \in \{ 20, 25, \hdots, 60\}$ for the small instances.
\I $n \in \{ 65, 70, \hdots, 100\}$ for the medium-sized instances.
\EI 
\I ``$\density$": The edge density of the input graph.
\BI
\I $\density \in \{ 0.3,0.5,0.7\}$ for the small instances.
\I $\density \in \{ 0.2,0.3,0.4,0.5\}$ for the medium-sized instances.
\EI
\I ``Inst.": The assigned instance number from $\{1,2,3\}$ for each $(n,\density)$ combination.
\EI
For pseudo-protein instances we have
\BI
\I ``Protein Inst.": the Protein Data Bank protein which was modified to create the instance.
\I ``Ratio": the approximate average edge to vertex ratio of the instance.
\EI

}
We also have
\BI
\I ``Time": Solution time in seconds if the instance is solved in the given time limit, ``TL" if the instance hit the time limit, ``MEM" if the instance hit the memory limit.
\I ``BB Nodes": The number of branch-and-bound nodes explored (for the IP formulations); exact if it is less than one thousand, lower bound rounded to the closest million otherwise where a single decimal point is used up to between one million for a better accuracy.
\I ``Ch.Pts.": The number of choice points (for the CP formulations); the number convention is the same as the ``BB Nodes".
\I ''Solved": indicates if a feasible solution was found.
\I ``DMDGP found": ``Yes" if the feasible solution is a DMDGP order, ``No" otherwise.
\I ``\# cliques": the number of ordered and unordered cliques for $\IPCD$ and $\Unorderedrelax$ respectively.
\EI

\begin{landscape}
\renewcommand\arraystretch{0.8}
\begin{center}
\small

\end{landscape}

\end{document}